\renewcommand\section{\@startsection {section}{1}{\z@}%
                                   {-3.5ex \@plus -1ex \@minus -.2ex}%
                                   {2.3ex \@plus.2ex}%
                                   {\centering\normalfont\bf}}
 \numberwithin{equation}{section}
\numberwithin{equation}{section}
\numberwithin{equation}{section}
\theoremstyle{plain}
\newtheorem{thm}{Theorem}[section]
\newtheorem{lemma}[thm]{Lemma}
\newtheorem{pro}[thm]{Proposition}
\newtheorem{ex}[thm]{Example}
\newtheorem{de}[thm]{Definition}
\newtheorem{re}[thm]{Remark}
\newtheorem*{co*}{Conjecture}
\newtheorem*{thm*}{Theorem}
\begin{document}
\title[Fourier bases of self-affine measures]{Fourier bases of a class of planar self-affine measures}
\author{Ming-Liang Chen,  Jing-Cheng Liu$^*$ and Zhi-Yong Wang}
\address{School of Mathematics and Computer Science, Gannan Normal University, Ganzhou 341000, P.R.  China}
\email{mathcml@163.com}
\address{Key Laboratory of Computing and Stochastic Mathematics (Ministry of Education), College of Mathematics and Statistics, Hunan Normal University, Changsha, Hunan 410081,  P.R. China}
\email{jcliu@hunnu.edu.cn}
\address{College of Mathematics and Computational Science, Hunan First Normal University, Changsha, Hunan 410205, P.R. China}
\email{wzyzzql@163.com}

\date{\today}
\keywords{Self-affine measure; Spectral measure; Spectrum; Admissible.}
\subjclass[2010]{Primary 28A25, 28A80; Secondary 42C05, 46C05.}
\thanks{ The research is supported in part by the NNSF of China (Nos. 12071125, 12001183, 11831007, and 11971500) and the Hunan Provincial NSF (No. 2020JJ5097).\\
$^*$Corresponding author.}

\begin{abstract}
Let $\mu_{M,D}$ be the  planar self-affine measure generated by an expansive integer matrix $M\in M_2(\mathbb{Z})$ and a non-collinear integer digit set
$D=\left\{\begin{pmatrix}
0\\0\end{pmatrix},\begin{pmatrix}
\alpha_{1}\\ \alpha_{2}
\end{pmatrix},
\begin{pmatrix}
\beta_{1}\\ \beta_{2}
\end{pmatrix},
\begin{pmatrix}
-\alpha_{1}-\beta_{1}\\ -\alpha_{2}-\beta_{2}
\end{pmatrix}\right\}$.  In this paper, we show that $\mu_{M,D}$ is a spectral measure if and only if there exists a matrix  $Q\in M_2(\mathbb{R})$  such that  $(\tilde{M},\tilde{D})$ is admissible, where $\tilde{M}=QMQ^{-1}$ and $\tilde{D}=QD$. In particular, when $\alpha_1\beta_2-\alpha_2\beta_1\notin 2\Bbb Z$, $\mu_{M,D}$ is a spectral measure if and only if $M\in M_2(2\mathbb{Z})$.
\end{abstract}

\maketitle

\section{\bf Introduction\label{sect.1}}
Let $\mu$ be a Borel probability measure with compact support on $\mathbb{R}^n$, and let $\langle\cdot,\cdot\rangle$ denote the standard inner product on $\mathbb{R}^n$. We say that $\mu$ is a \emph{spectral measure} if there exists a countable set $\Lambda\subset\mathbb{R}^n$ such that  the exponential function system $E_\Lambda:=\{e^{2\pi i\langle\lambda,x\rangle}:\lambda\in\Lambda\}$ forms an orthonormal basis for  the Hilbert space $L^2(\mu)$. In this case, we call $\Lambda$  a \emph{spectrum} of $\mu$ and  $(\mu,\Lambda)$  a \emph{spectral pair}.
In particular, if $\mu$ is the normalized Lebesgue measure supported on a Borel set $\Omega$, then $\Omega$ is called a \emph{spectral set}.

Spectral measure is a natural generalization of spectral set introduced by Fuglede \cite{Fuglede_1974}, who proposed the famous conjecture: $\Omega$ is a spectral set if and only if $\Omega$ is a translational tile. It is known \cite{HLL} that a spectral measure $\mu$ must be of pure type: $\mu$ is either discrete, or absolutely continuous or singularly continuous. The first singularly continuous spectral measure was constructed by Jorgensen and Pedersen in 1998 \cite{Jorgensen-Pedersen_1998}. They  proved that the middle-fourth Cantor measure is a spectral measure with  spectrum
$$\Lambda=\left\{\sum_{k=0}^n 4^{k}\ell_k: \ell_k\in \{0,1\},n\in\mathbb{N}\right\}.$$
Following this discovery, there is a considerable number of papers on the spectrality of self-similar/self-affine measures and the construction of their spectra,  see \cite{An-He-Tao_2015,Dai_2012,Dai-He-Lau_2014,Deng-Lau_2015,Dutkay-Jorgensen_2007,
Dutkay-Jorgensen_2007_1,Dutkay-Haussermann-Lai_2019,FHL,Laba-Wang_2002}  and the references  therein. These results are generalized further to  some classes of Moran measures (see e.g.,
\cite{An-He_2014,DL,Fu-He-Wen}), and some surprising convergence properties of the associated Fourier series were discovered in \cite{Strichartz_2000,S_06}. These fractal measures also have very close connections with the theory of multiresolution analysis in wavelet analysis, see
 \cite{DJ_06}.

In \cite{DuJ09}, Dutkay and Jorgensen summarized some known results regarding iterated function systems (IFS, see \cite{Hutchinson_1981}  for details). Two approaches to harmonic analysis on IFS  have been popular: one based on a discrete version of the more familiar and classical second order Laplace differential operator of potential theory, see \cite{K04,KSW01,LNRG}; and the other is based on Fourier series. The first model in turn is motivated by infinite discrete network of resistors, and the harmonic functions are defined by minimizing a global measure of resistance, but this approach does not rely on Fourier series. In contrast, the second approach begins with Fourier series, and it has its classical origins in lacunary Fourier series \cite{Ka86}.

Let $\{\phi_d(x)\}_{d\in D}$ be an \emph{iterated function system} (IFS) defined by $\phi_d(x)=M^{-1}(x+d)~(x\in \mathbb{R}^n,d\in D)$, where
$M\in M_n(\mathbb{R})$ is an expansive real matrix (that is, all the eigenvalues of $M$ have modulus strictly greater than one) and $D\subset\mathbb{R}^n$ is a finite digit set with cardinality $\#D$.  It is known \cite{Hutchinson_1981} that there exists a unique probability measure $\mu_{M,D}$   satisfying that
\begin{equation}\label{1.1}
\mu_{M,D}=\frac{1}{\#D}\sum_{d\in D}\mu_{M,D}\circ\phi_d^{-1}.
\end{equation}
This measure is supported on  $T(M,D)$, which is the unique nonempty compact set satisfying that $T(M,D)=\bigcup_{d\in D}\phi_d(T(M,D))$. Alternatively, it can also be described as a set of radix expansion with base $M$, that is,
$$T(M,D)=\left\{\sum_{k=1}^\infty M^{-k}d_k: d_k\in D\right\}:=\sum_{k=1}^\infty M^{-k}D.$$
We call $\mu_{M,D}$ a \emph{self-affine measure} and $T(M,D)$ a \emph{self-affine set}, respectively. In particular, if $M$ is a multiple of an orthonormal matrix, then $\mu_{M,D}$ and $T(M,D)$ are called {\it self-similar measure} and  {\it self-similar set}, respectively.
It is known that a self-affine measure $\mu_{M,D}$ can be expressed by the infinite convolution  of discrete measures as follows:
\begin{equation*}
\mu_{M,D}=\delta_{M^{-1}D}*\delta_{M^{-2}D}*\delta_{M^{-3}D}*\cdots,
\end{equation*}
where $*$ is the convolution sign, $\delta_E=\frac{1}{\#E}\sum_{e\in E}{\delta_e}$ for a finite set $E$ and $\delta_e$ is the Dirac measure at the point $e$.

Self-affine measures have the advantage that their Fourier transforms (see \eqref{2.1}) can be explicitly written down as an infinite product, which allows us to compute their zeros. The previous research on self-affine measures $\mu_{M,D}$ and its Fourier transform  have revealed some surprising connections with a number of areas in mathematics such as harmonic analysis, dynamical system, number theory and others (see, e.g., \cite{GM,JKS,S_1994}).
For a self-affine measure  $\mu_{M,D}$,  it is difficult to verify that $\mu_{M,D}$ is not a spectral measure. Some results are available in this direction: it was proved \cite{Jorgensen-Pedersen_1998} that the middle-third Cantor set does not have more than two mutually orthogonal exponentials.  In \cite{LaW06}, {\L}aba and Wang  proved that a class of absolutely continuous measures with good decay of the Fourier transform are not spectral, and that if the diameter of the support set of an absolutely continuous measure is not much larger than the measure, then the set tiles (and is spectral) by a lattice. In \cite{Dutkay-Jorgensen_2007}, a large number of self-affine  Sierpinski-type measures are investigated and shown to be spectral or non-spectral. In \cite{Li07}, conditions are given involving the prime factorization of $\det(M)$ to guarantee that the corresponding measure   $\mu_{M,D}$ does not have certain spectra.

The construction of self-affine spectral measures usually stems from the existence of Hadamard triple  (known also as compatible pair). The appearance of Hadamard triple stems from the terminology of \cite{Strichartz_2000}.

\begin{de}\label{deA}
{\rm Let $M\in M_n(\mathbb{Z})$ be an expansive integer matrix, and let $D,S\subset\mathbb{Z}^n$ be two finite digit
sets with  $\#D=\#S=N$. We say
that $(M, D)$ is {\it admissible} (or $(M^{-1}D, S)$ forms a {\it compatible
pair} or $(M,D,S)$ forms a {\it Hadamard triple}) if the matrix
$$
H=\frac{1}{\sqrt{N}}\begin{pmatrix}e^{2\pi i\langle M^{-1}d, s\rangle}\end{pmatrix}_{d\in D, s\in S}
$$
is unitary, i.e., $H^*H=I$, where $I$ is a $n\times n$ identity matrix.}
\end{de}

The well-known result of Jorgensen and Pedersen \cite{Jorgensen-Pedersen_1998} shows that
if $(M, D)$ is admissible, then there are infinite families of orthogonal exponential functions in $L^2(\mu_{M,D})$. Moreover,
Dutkay and Jorgensen \cite{Dutkay-Jorgensen_2007_1,Dutkay-Jorgensen_2009} formulated the  famous conjecture:  if $(M, D)$ is admissible, then $\mu_{M,D}$ is a spectral measure.
It was first proved in one dimension by {\L}aba and Wang [25].  Moreover, the conjecture is true in high  dimensions under some additional assumptions, introduced by Strichartz [35].  There are many other papers investigated it in  high dimensional cases, see \cite{Dutkay-Jorgensen_2007,Li_2013}. In the end, Dutkay, Haussermann and Lai  \cite{Dutkay-Haussermann-Lai_2019} proved that

\begin{thm}\label{th(DHL)}
Let $M\in M_n(\mathbb{Z})$ be an  expansive integer matrix, and let $D\subset\mathbb{Z}^n$ be a finite digit
set. If $(M, D)$ is admissible, then  $\mu_{M,D}$ is a spectral measure.
\end{thm}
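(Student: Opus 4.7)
The plan is to use the Hadamard triple hypothesis to build an explicit candidate spectrum $\Lambda$, verify orthogonality of $E_\Lambda$ from the infinite product formula for $\hat\mu_{M,D}$, and then establish completeness via an ergodic/random-measure argument. First, by admissibility there exists a dual digit set $S\subset\mathbb{Z}^n$ with $\#S=\#D$ so that $(M,D,S)$ is a Hadamard triple. Considering the adjoint IFS $\psi_s(\xi)=M^{\ast}\xi+s$ for $s\in S$, I would build a candidate spectrum of the form
\[
\Lambda=\Bigl\{s_0+M^{\ast} s_1+(M^{\ast})^2 s_2+\cdots+(M^{\ast})^k s_k : s_j\in S_j,\ k\ge 0\Bigr\},
\]
where $\{S_j\}$ is a sequence of dual digit sets chosen to produce an invariant labelled tree, following Strichartz and Dutkay--Jorgensen.

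Orthogonality is the straightforward step. Using $\hat\mu_{M,D}(\xi)=\prod_{k=1}^\infty m_D((M^{\ast})^{-k}\xi)$ with the mask $m_D(\xi)=\tfrac{1}{\#D}\sum_{d\in D}e^{2\pi i\langle d,\xi\rangle}$, the Hadamard identity forces $m_D((M^{\ast})^{-1}(s-s'))=0$ for any distinct $s,s'\in S$. A level-by-level induction on the tree then gives $\hat\mu_{M,D}(\lambda-\lambda')=0$ for all distinct $\lambda,\lambda'\in\Lambda$, so $E_\Lambda$ is orthonormal in $L^2(\mu_{M,D})$.

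For completeness I set $Q_\Lambda(\xi)=\sum_{\lambda\in\Lambda}|\hat\mu_{M,D}(\xi+\lambda)|^2$. Orthogonality gives the Jorgensen--Pedersen bound $Q_\Lambda\le 1$, and $\Lambda$ is a spectrum iff $Q_\Lambda\equiv 1$. I would introduce the Bernoulli probability space $(\Omega,\mathbb{P})=(S^{\mathbb{N}},\mathrm{uniform})$ and the random sums $\lambda_n(\omega)=\sum_{j=0}^{n-1}(M^{\ast})^j s_j$; the Hadamard identity then gives
\[
\mathbb{E}\bigl[|\hat\mu_{M,D}(\xi+\lambda_n)|^2\bigr]=\bigl|\hat\mu_{M,D}((M^{\ast})^{-n}\xi)\bigr|^2\longrightarrow 1
\]
as $n\to\infty$, since $(M^{\ast})^{-n}\xi\to 0$ and $\hat\mu_{M,D}(0)=1$. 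Combining this averaged identity with the sub-invariance $Q_\Lambda(\xi)\ge Q_\Lambda(M^{\ast}\xi)$ obtained from the mask structure, and applying Fatou's lemma, I expect to upgrade the averaged statement to the pointwise Lebesgue-a.e.\ identity $Q_\Lambda(\xi)\equiv 1$.

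The hard part will be the reducible case. When $S$ lies in a coset of a proper $M^{\ast}$-invariant sublattice modulo some quotient, iterating a single fixed $S$ loses Fourier mass along certain branches of the tree, and $Q_\Lambda$ fails to reach $1$. The crux of the Dutkay--Haussermann--Lai argument is a finite-stage reduction that replaces the single dual set $S$ by a tree of dual sets $\{S_j\}$ for which the finite products $\prod_{k=1}^n m_D((M^{\ast})^{-k}\lambda_n)$ admit a uniform positive lower bound on a set of $\mathbb{P}$-measure tending to $1$ --- the so-called ``equi-positive'' condition. Establishing this uniform lower bound, via careful combinatorial and ergodic control of the zeros of the mask along the tree, is the deepest point; once it is in hand, the completeness argument closes and $\mu_{M,D}$ is a spectral measure.
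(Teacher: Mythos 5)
The paper does not prove this theorem at all: it is quoted as the main result of Dutkay, Haussermann and Lai \cite{Dutkay-Haussermann-Lai_2019} and used as a black box, so there is no internal proof to compare yours against. Judged on its own, your proposal correctly identifies the architecture of the DHL argument (a candidate spectrum built from dual digit sets along the adjoint IFS, orthogonality from the Hadamard identity, completeness via the Jorgensen--Pedersen criterion $Q_\Lambda\equiv 1$), but it is a roadmap rather than a proof, and it contains one genuine error plus one unproved core step.

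The error: the displayed identity $\mathbb{E}\bigl[|\hat\mu_{M,D}(\xi+\lambda_n)|^2\bigr]=\bigl|\hat\mu_{M,D}((M^{\ast})^{-n}\xi)\bigr|^2$ is false. Writing $\hat\mu(\xi)=\hat\mu_n(\xi)\,\hat\mu((M^{\ast})^{-n}\xi)$ with $\hat\mu_n(\xi)=\prod_{k=1}^n m_D((M^{\ast})^{-k}\xi)$, the Hadamard property gives $\sum_{\lambda\in\Lambda_n}|\hat\mu_n(\xi+\lambda)|^2=1$ for $\Lambda_n=S_0+M^{\ast}S_1+\cdots+(M^{\ast})^{n-1}S_{n-1}$, hence
\[
\sum_{\lambda\in\Lambda_n}|\hat\mu(\xi+\lambda)|^2=\sum_{\lambda\in\Lambda_n}|\hat\mu_n(\xi+\lambda)|^2\,\bigl|\hat\mu\bigl((M^{\ast})^{-n}(\xi+\lambda)\bigr)\bigr|^2,
\]
a weighted average of $|\hat\mu|^2$ over the backward orbit points $(M^{\ast})^{-n}(\xi+\lambda)$. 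These points do \emph{not} tend to $0$; they accumulate on the attractor of the dual IFS $\{M^{\ast-1}(\xi+s)\}_{s\in S}$, and whether $|\hat\mu|^2$ admits a uniform positive lower bound there is precisely the equi-positivity question, so your formula assumes the very point at issue. Consequently the claimed route through Fatou and sub-invariance does not close. Second, the ``finite-stage reduction'' handling the reducible case is only named, not carried out; that reduction, together with the verification of equi-positivity for the reduced system, is essentially the entire content of the Dutkay--Haussermann--Lai paper. As it stands your proposal establishes orthogonality of $E_\Lambda$ but not completeness, which is the nontrivial half of the theorem.
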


In \cite{FHL}, Fu, He and Lau  gave an example to illustrate that the sufficient condition in Theorem \ref{th(DHL)}
is not necessary in one dimension. For an expansive integer matrix $M\in M_2(\mathbb{Z})$ and the classic digit set $D=\left\{\begin{pmatrix}
0\\0\end{pmatrix},\begin{pmatrix}
1\\ 0
\end{pmatrix},
\begin{pmatrix}
0\\ 1
\end{pmatrix}\right\}$,
the spectrality and non-spectrality of the corresponding self-affine measure $\mu_{M,D}$ have been widely investigated by many researchers, see \cite{Dutkay-Jorgensen_2007,Li_2008,Li_2013}. Eventually,  An, He and Tao \cite{An-He-Tao_2015} completely settled the spectrality of $\mu_{M,D}$.  More precisely, they showed that $\mu_{M,D}$ is a spectral measure if and only if $(M, D)$ is admissible. For a more general integer digit set $D$ with $0\in D$ and $\#D=3$, there is also a complete spectral characterization, see \cite{CL,LW,LZWC}.
In addition to these, another important integer digit set is
\begin{equation}\label{1.2}
D =\left\{\begin{pmatrix}
0\\0\end{pmatrix},\begin{pmatrix}
\alpha_{1}\\ \alpha_{2}
\end{pmatrix},
\begin{pmatrix}
\beta_{1}\\ \beta_{2}
\end{pmatrix},
\begin{pmatrix}
-\alpha_{1}-\beta_{1}\\ -\alpha_{2}-\beta_{2}
\end{pmatrix}\right\},
\end{equation}
where $\alpha_1\beta_2-\alpha_2\beta_1\neq0$. The existence of infinitely many orthogonal exponentials in $L^2(\mu_{M,D})$ has been fully studied by  \cite{Li-2014,Su-Liu-Liu_2019,SWC}.
However, to the best of our knowledge, the complete description of spectral properties is not known yet, even for the special case where $\alpha_{1}=1$, $\alpha_{2}=0$, $\beta_{1}=0$ and $\beta_{2}=1$. A natural subsequent question is that

{\bf (Qu 1):} For an expansive integer matrix $M\in M_2(\mathbb{Z})$  and the digit set $D$ given by \eqref{1.2}, what is the sufficient and necessary condition for $\mu_{M,D}$ to be a spectral measure?

In this paper, our main purpose is to initiate a study on the question {\bf (Qu 1)}. It is difficult to answer this question by the known methods. Until recently, we get inspiration from  \cite{DL}, which considers the spectrality of a class of Moran-type self-similar measures.
Before presenting our results, a reasonable assumption for the digit set $D$ is necessary. Without loss of generality, we can assume that $\gcd(\alpha_1,\alpha_2,\beta_1,\beta_2)=1$  by Lemma \ref{th(2.2)}.

Our first main result is as follows:

\begin{thm}\label{th(main)}
Let  $\mu_{M,D}$  be defined by \eqref{1.1}, where $M\in M_2(\Bbb Z)$ is an expansive integer matrix and $D$ is given by \eqref{1.2}. Then  $\mu_{M,D}$ is a spectral measure if and only if there exists a matrix  $Q\in M_2(\mathbb{R})$  such that  $(\tilde{M},\tilde{D})$ is admissible, where $\tilde{M}=QMQ^{-1}$ and $\tilde{D}=QD$.
\end{thm}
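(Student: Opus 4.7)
The plan is to prove each direction separately. The ``if'' implication is a short push-forward argument combined with Theorem~\ref{th(DHL)}, while the converse is the main work and rests on a precise description of $\mathcal Z(\widehat{\mu}_{M,D})$ together with a lattice-theoretic construction of $Q$.

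For the ``if'' direction I would first verify that the two IFSs are conjugate by the linear map $Q$, i.e.\ $Q\circ\phi_d\circ Q^{-1}=\tilde\phi_{Qd}$, so by uniqueness of the self-affine measure one has $\mu_{\tilde M,\tilde D}=Q_*\mu_{M,D}$. A change of variables then gives $\widehat{\mu}_{\tilde M,\tilde D}(\xi)=\widehat{\mu}_{M,D}(Q^{T}\xi)$, and in particular $\tilde\Lambda$ is a spectrum of $\mu_{\tilde M,\tilde D}$ iff $Q^{T}\tilde\Lambda$ is a spectrum of $\mu_{M,D}$; combining this with Theorem~\ref{th(DHL)} applied to the admissible integer pair $(\tilde M,\tilde D)$ closes the direction.

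For the converse I would start by determining the zero set of the mask $m_D(\xi)=\frac14\sum_{d\in D}e^{2\pi i\langle d,\xi\rangle}$. Using $D=-D$, a straightforward trigonometric manipulation collapses $4m_D(\xi)$ into a linear combination of $\cos(\pi\langle a+b,\xi\rangle)$ and $\cos(\pi\langle a-b,\xi\rangle)$ (with $a=(\alpha_1,\alpha_2)^T$, $b=(\beta_1,\beta_2)^T$), and a short case analysis then yields
\[
\mathcal Z(m_D)=A^{-1}\bigl(\tfrac12\mathbb Z^2\setminus\mathbb Z^2\bigr),\qquad A=\begin{pmatrix}\alpha_1 & \alpha_2\\ \beta_1 & \beta_2\end{pmatrix}.
\]
Rewriting admissibility in the same language, $(\tilde M,\tilde D)$ is admissible precisely when there exist four elements $s_0,\dots,s_3\in Q^{T}\mathbb Z^2$ whose images under $AM^{-T}$ represent all four cosets of $\tfrac12\mathbb Z^2/\mathbb Z^2$, subject to the integrality constraints $QA^T\in M_2(\mathbb Z)$ and $QMQ^{-1}\in M_2(\mathbb Z)$.

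The main obstacle is to produce such a $Q$ from the mere existence of a spectrum $\Lambda$. Following the Moran-measure strategy of \cite{DL}, I would exploit the fact that every nonzero element of $\Lambda-\Lambda$ must land in $\bigcup_{k\geq 1}(M^{T})^{k}\mathcal Z(m_D)$; chasing these inclusions through the lattice $A^{-1}\mathbb Z^2$ should yield $2$-adic divisibility conditions linking $M$ to $A$, from which a rational $Q$ realising $\tilde M,\tilde D$ as an admissible integer pair can be read off. The displayed special case $\alpha_1\beta_2-\alpha_2\beta_1\notin 2\mathbb Z$ should be the cleanest instance: there $A$ is invertible modulo $2$, the divisibility analysis collapses to $M\in M_2(2\mathbb Z)$, and $Q$ can be taken as $A^{-T}$ up to an integer-unimodular correction.
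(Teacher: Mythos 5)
Your ``if'' direction is essentially the paper's: the conjugacy/push-forward argument is exactly Lemma \ref{th(2.2)}, combined with Theorem \ref{th(DHL)}, and your description of $\mathcal Z(m_D)$ as $A^{-1}\bigl(\frac12\mathbb Z^2\setminus\mathbb Z^2\bigr)$ agrees with the paper's computation \eqref{3.2}--\eqref{3.3}. The problem is the converse, where your proposal has a genuine gap rather than a different route. The only property of the spectrum $\Lambda$ that your sketch actually invokes is the orthogonality relation $(\Lambda-\Lambda)\setminus\{0\}\subset\bigcup_{k\ge1}(M^{*})^{k}\mathcal Z(m_D)$. That relation cannot possibly force the divisibility conditions you need: for this digit set it is known (and the paper cites this explicitly) that $L^2(\mu_{M,D})$ admits infinite orthogonal families of exponentials for many non-spectral $M$, so ``chasing the inclusions through the lattice $A^{-1}\mathbb Z^2$'' from orthogonality alone will never collapse to $M\in M_2(2\mathbb Z)$. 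The necessity must use \emph{completeness}, i.e.\ the Jorgensen--Pedersen identity $Q_{\mu,\Lambda}\equiv1$ of Theorem \ref{th(JP)}, and your proposal never does.

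Concretely, what the paper does and what is missing from your plan: after normalizing $D$ by the explicit integer matrix $Q$ of \eqref{1.4}--\eqref{1.5} (so the ``construction of $Q$'' is not the hard part --- it is written down in advance from $\gcd$ data), one passes to the Moran measure $\mu_{A,\tilde M,\tilde D}$ with $A=\mathrm{diag}(2^{\eta+1}\alpha\beta,2^{\eta+1}\alpha\beta)$ via Lemma \ref{th(2.4)}, decomposes any spectrum $\Lambda$ of it along the complete residue system $\mathcal S_\eta\oplus2^{\eta+1}\alpha\beta\mathcal T_\eta$ of Proposition \ref{th(3.3)}, and proves the selection result Lemma \ref{th(3.5)}: for \emph{any} choice of one class $i_s\in\{0,1,2,3\}$ per residue $s$, the assembled set $\Gamma$ is again a spectrum of $\mu_{\tilde M,\tilde D}$. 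That lemma is exactly where completeness enters, through $Q_{\mu,\Lambda}\equiv1$ and the combinatorial Lemma \ref{th(DL)}. The divisibility $\tilde M\in M_2(2\mathbb Z)$ (and, when $\eta>0$, the extra condition $2^{\eta+1}\mid\tilde c$ of Lemma \ref{th(3.7)}) is then extracted by choosing $\Gamma$ to avoid certain residue classes mod $2\mathbb Z^2$ and observing that $2^{\eta+1}\alpha\beta\tilde M^{*-1}\Gamma$ must again be a spectrum admitting the same decomposition, which forces $2^{\eta+1}\alpha\beta\tilde M^{*}\ell_i\in2\mathbb Z^2$ for representatives $\ell_i$ of all three nonzero classes. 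None of this mechanism is present in your sketch, and without it the phrase ``should yield $2$-adic divisibility conditions'' is not a proof step. To repair the proposal you would need to supply (at minimum) analogues of Lemmas \ref{th(3.5)} and \ref{th(3.7)}.
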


We remark that Theorem \ref{th(main)} gives a complete answer to the spectral question {\bf (Qu 1)}. We now outline the strategy of the proof of Theorem \ref{th(main)}.
The sufficiency of Theorem  \ref{th(main)} follows directly from Theorem \ref{th(DHL)} and Lemma \ref{th(2.2)}.
The more challenging part of the proof is the necessity.
The key point  is to construct a self-affine measure $\mu_{\tilde{M},\tilde{D}}$  so that it has the same spectrality as the measure $\mu_{M,D}$, and then the necessity follows immediately from  Theorems  \ref{th(1.5)} and \ref{th(1.6)}. What is exciting is that the proof method of  the necessity is new and completely different from the previous work proving spectral self-affine measures.

It is worth noting that if $D$ satisfies $\alpha_1\beta_2-\alpha_2\beta_1\notin 2\Bbb Z$, we can give more explicit sufficient and necessary conditions for $\mu_{M,D}$ to be a spectral measure. Before presenting them, some notations are needed.  For any  integer $p\geq 2$, we denote
\begin{equation}\label{1.3}
\mathcal{F}_p^2:=\frac{1}{p}\left\{\begin{pmatrix}
l_1 \\
l_2
\end{pmatrix}:0\leq l_1,l_2\leq p-1, l_i\in\mathbb{Z}\right\} \quad {\rm and} \quad \mathring{\mathcal{F}}_p^2:=\mathcal{F}_p^2\setminus\{\bf{0}\}.
\end{equation}
Under the above notations and the assumption of $\alpha_1\beta_2-\alpha_2\beta_1\notin 2\Bbb Z$, we give the second main result:

\begin{thm}\label{th(main2)}
Let  $\mu_{M,D}$ and $\mathring{\mathcal{F}}_p^2$  be defined by \eqref{1.1} and \eqref{1.3} respectively, where $M\in M_2(\Bbb Z)$ is an expansive integer  matrix and $D$ is given by \eqref{1.2}. If $\alpha_1\beta_2-\alpha_2\beta_1\notin 2\Bbb Z$, then the following statements are equivalent:
 \begin{enumerate}[(i)]
 	\item   $\mu_{M,D}$ is a spectral measure;
 \item $M\in M_2(2\Bbb Z)$;
 \item $M\mathring{\mathcal{F}}_2^2\subset\mathbb{Z}^2$;
  \item $(M,D)$ is admissible.
 \end{enumerate}
\end{thm}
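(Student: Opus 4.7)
The plan is to establish the cycle (ii) $\Leftrightarrow$ (iii), (ii) $\Rightarrow$ (iv), (iv) $\Rightarrow$ (i), and (i) $\Rightarrow$ (ii). The equivalence (ii) $\Leftrightarrow$ (iii) is immediate: since $\mathring{\mathcal{F}}_2^2 = \{\tfrac{1}{2}e_1, \tfrac{1}{2}e_2, \tfrac{1}{2}(e_1+e_2)\}$, the containment $M\mathring{\mathcal{F}}_2^2\subset\mathbb{Z}^2$ says exactly that every column of $M$ lies in $2\mathbb{Z}^2$. The implication (iv) $\Rightarrow$ (i) is Theorem \ref{th(DHL)}.

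For (ii) $\Rightarrow$ (iv), I would write $M=2N$ with $N\in M_2(\mathbb{Z})$ and take $S:=N^T\{(0,0)^T,e_1,e_2,e_1+e_2\}$. Then $\langle M^{-1}d,s\rangle=\tfrac{1}{2}\langle d,t\rangle$ when $s=N^T t$, so the entries of $H$ become $\tfrac{1}{2}(-1)^{\langle d,t\rangle}$. Since $A:=\alpha_1\beta_2-\alpha_2\beta_1$ is odd, $\{v_1,v_2\}$ is a basis of $(\mathbb{Z}/2\mathbb{Z})^2$, so $D\bmod 2=\{(0,0)^T,e_1,e_2,e_1+e_2\}$, and $H$ coincides (after a row permutation) with the standard $4\times 4$ Walsh--Hadamard matrix, hence is unitary.

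The main difficulty lies in (i) $\Rightarrow$ (ii). A key preliminary is to identify $\mathcal{Z}(m_D)=\tfrac{1}{2}\mathbb{Z}^2\setminus\mathbb{Z}^2$ under the oddness hypothesis. Setting $u=\alpha_1\xi_1+\alpha_2\xi_2$ and $v=\beta_1\xi_1+\beta_2\xi_2$, one factors
\begin{equation*}
4\,m_D(\xi)=4\cos(\pi u)\cos(\pi v)\,e^{i\pi(u+v)}-2i\sin 2\pi(u+v),
\end{equation*}
whose zero set in $(u,v)$-coordinates modulo $\mathbb{Z}^2$ equals $\mathring{\mathcal{F}}_2^2$; since $A$ odd makes the change of variable $\xi\mapsto(u,v)$ invertible mod $2$, this pulls back to $\tfrac{1}{2}\mathbb{Z}^2\setminus\mathbb{Z}^2$. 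Given spectrality, Theorem \ref{th(main)} supplies $Q\in M_2(\mathbb{R})$ with $(\tilde M,\tilde D)=(QMQ^{-1},QD)$ admissible; writing $\tilde S=\{0,\tilde s_2,\tilde s_3,\tilde s_4\}\subset\mathbb{Z}^2$, the Hadamard condition gives $\tilde M^{-T}(\tilde s_i-\tilde s_j)\in\mathcal{Z}(m_{\tilde D})$ for all $i\ne j$.

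The hardest step is descending from admissibility of $(\tilde M,\tilde D)$ to $M\in M_2(2\mathbb{Z})$. The strategy is: $QD\subset\mathbb{Z}^2$ forces $Q[v_1\mid v_2]\in M_2(\mathbb{Z})$, so $Q$ is determined by an integer matrix up to $[v_1\mid v_2]^{-1}$; composing with a $\mathrm{GL}_2(\mathbb{Z})$ automorphism if needed, one normalizes $Q$ so that $\tilde D$ still has odd determinant. Then the same zero-set calculation applies to $\tilde D$, and the four integer points $\tilde s_i$ must fill the four distinct cosets of $\tilde M^T\mathbb{Z}^2$ inside $\tfrac{1}{2}\tilde M^T\mathbb{Z}^2$; a coset-counting argument (each representative must be integral) forces $\tilde M\in M_2(2\mathbb{Z})$. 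Finally, $M=Q^{-1}\tilde M Q$ combined with the lattice compatibility coming from $QD\subset\mathbb{Z}^2$ yields $M\in M_2(2\mathbb{Z})$. The principal obstacle I foresee is ensuring that the $Q$ produced by Theorem \ref{th(main)} can indeed be normalized to preserve the oddness of the digit-set determinant, so that the clean zero-set argument transfers intact to the transformed system.
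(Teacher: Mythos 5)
Your implications (ii)$\Leftrightarrow$(iii), (ii)$\Rightarrow$(iv) and (iv)$\Rightarrow$(i) are fine; the Walsh--Hadamard construction with $S=N^{T}\{0,e_1,e_2,e_1+e_2\}=M^{*}\mathcal{F}_2^2$ is essentially the paper's spectral set $\tilde{C}$ in the step ``(iii)$\Rightarrow$(iv)'' of Theorem \ref{th(1.5)}. The problems are all in (i)$\Rightarrow$(ii). First, invoking Theorem \ref{th(main)} there is circular: in the paper the necessity of Theorem \ref{th(main)} is itself deduced from Theorems \ref{th(1.5)} and \ref{th(1.6)}, and Theorem \ref{th(1.5)} is exactly the similarity-transformed version of Theorem \ref{th(main2)} whose hard direction is ``(i)$\Rightarrow$(ii)''. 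You cannot use Theorem \ref{th(main)} as a black box here without independently proving it, which your proposal does not do. Second, your identification $\mathcal{Z}(m_D)=\tfrac12\mathbb{Z}^2\setminus\mathbb{Z}^2$ is false whenever $|\alpha_1\beta_2-\alpha_2\beta_1|>1$. Your factorization correctly shows the zero set in the $(u,v)$-coordinates is $\tfrac12\mathbb{Z}^2\setminus\mathbb{Z}^2$, but pulling back under $\xi\mapsto B^{*}\xi$ with $B=(\alpha\,|\,\beta)$ gives $\{\xi: B^{*}\xi\in\tfrac12\mathbb{Z}^2\setminus\mathbb{Z}^2\}$, which strictly contains $\tfrac12\mathbb{Z}^2\setminus\mathbb{Z}^2$ unless $\det B=\pm1$: for $D=\{0,(1,0)^t,(0,3)^t,(-1,-3)^t\}$ the point $(0,1/6)^t$ lies in $\mathcal{Z}(m_D)$. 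Only the containment $\tfrac12\mathbb{Z}^2\setminus\mathbb{Z}^2\subset\mathcal{Z}(m_D)$ (Proposition \ref{th(3.1)}(iii)) is valid, which suffices for (ii)$\Rightarrow$(iv) but not for your coset-counting step, since the differences $\tilde{M}^{*-1}(\tilde{s}_i-\tilde{s}_j)$ need not land in $\tfrac12\mathbb{Z}^2$. Third, even granting admissibility of $(\tilde{M},\tilde{D})$ for some real $Q$, the descent to $M\in M_2(2\mathbb{Z})$ is only sketched, and you yourself flag the normalization of $Q$ as unresolved.

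For comparison, the paper does not attempt any of this. It reduces Theorem \ref{th(main2)} to Theorem \ref{th(1.5)} by the explicit unimodular integer matrix $Q$ of \eqref{1.4}--\eqref{1.5} (so conditions (i)--(iv) transfer verbatim), and the genuinely hard step ``(i)$\Rightarrow$(ii)'' of Theorem \ref{th(1.5)} is analytic rather than algebraic: one passes to the Moran measure $\mu_{A,\tilde{M},\tilde{D}}$ with $A=\mathrm{diag}(2\alpha\beta,2\alpha\beta)$, builds the complete residue system $\mathcal{S}_\eta\oplus 2\alpha\beta\mathcal{T}_\eta$ of $A$ (Proposition \ref{th(3.3)}), decomposes a spectrum of $\mu_{A,\tilde{M},\tilde{D}}$ as in \eqref{3.11}, and uses Lemma \ref{th(3.5)} (via Lemma \ref{th(DL)}) to manufacture spectra $\Gamma$ of $\mu_{\tilde{M},\tilde{D}}$ with prescribed congruence behaviour; comparing $\Gamma$ with $2\alpha\beta\tilde{M}^{*-1}\Gamma$ then forces $2\alpha\beta\tilde{M}^{*}\ell_i\in2\mathbb{Z}^2$ for representatives $\ell_i$ of all three nonzero classes mod $2\mathbb{Z}^2$, hence $\tilde{M}\in M_2(2\mathbb{Z})$. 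Some argument of this analytic type (or a genuinely new one) is required; your proposal does not supply it.
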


We point out that the proofs of Theorems \ref{th(main)} and \ref{th(main2)}  are based on the precise  form of the matrix $\tilde{M}$ in Theorem \ref{th(main)}.
Before giving the form, some technical work needs to be done. For an expansive integer matrix $M\in M_2(\mathbb{Z})$ and the digit set $D$ given by \eqref{1.2},  we can let $M=\begin{pmatrix}
a&b \\
c&d
\end{pmatrix}$ and $\alpha_1\beta_2-\alpha_2\beta_1=2^\eta\gamma$ with $\eta\geq0$ and $\gamma\notin2\mathbb{Z}$. Without loss of generality, we assume $\gcd(\alpha_1,\alpha_2)=\alpha$ with $\alpha\notin2\mathbb{Z}$ (otherwise, we can  choose $\alpha=\gcd(\beta_1,\beta_2)$ with $\alpha\notin2\mathbb{Z}$ since $\gcd(\alpha_1,\alpha_2,\beta_1,\beta_2)=1$).   Let $\alpha_1=\alpha t_1$ and $\alpha_2=\alpha t_2$ with $\gcd(t_1,t_2)=1$, then there exist $p,q\in\mathbb{Z}$ such that $pt_1+qt_2=1$. Clearly, $\alpha=p\alpha_1+q\alpha_2$ and $\alpha|\gamma$. For convenience, we denote $\omega=p\beta_1+q\beta_2$ and $\beta=\gamma/\alpha$. It is easy to check that $t_1\alpha_2=t_2\alpha_1$ and $t_1\beta_2-t_2\beta_1=2^\eta\beta$ with $\beta\notin2\mathbb{Z}$.
Denote $Q=\begin{pmatrix}
 p&q \\
 -t_2&t_1
\end{pmatrix}$. Then one has
\begin{equation}\label{1.4}
\tilde{M}:=QMQ^{-1}
=\begin{pmatrix}
(pa+qc)t_1+(pb+qd)t_2 & (pb+qd)p-(pa+qc)q \\
(ct_1-at_2)t_1+(dt_1-bt_2)t_2 & (dt_1-bt_2)p-(ct_1-at_2)q
\end{pmatrix}
\end{equation}
and
\begin{equation}\label{1.5}
\begin{aligned}
\tilde{D}:&=QD=\left\{ \begin{pmatrix}
0\\0\end{pmatrix},\begin{pmatrix}
\alpha\\0\end{pmatrix},\begin{pmatrix}
\omega \\ 2^\eta\beta\end{pmatrix},\begin{pmatrix}
-\alpha-\omega \\
-2^\eta\beta\end{pmatrix}\right\}\subset\mathbb{Z}^2.
\end{aligned}
\end{equation}
Obviously, $\tilde{M}$ is an expansive integer matrix with $\det(\tilde{M})=\det(M)$. Moreover, $\eta=0$ and $\eta>0$ are equivalent to $\alpha_1\beta_2-\alpha_2\beta_1\notin 2\Bbb Z$ and $\alpha_1\beta_2-\alpha_2\beta_1\in 2\Bbb Z$, respectively.

For $\eta=0$ in $\tilde{D}$, we have the following conclusion, which  is equivalent to  Theorem \ref{th(main2)} by using the property of similarity transformation.

\begin{thm}\label{th(1.5)}
Let  $\mu_{\tilde{M},\tilde{D}}$  and $\mathring{\mathcal{F}}_p^2$  be defined by \eqref{1.1} and \eqref{1.3} respectively, where $\tilde{M}$ and $\tilde{D}$ are given by \eqref{1.4} and \eqref{1.5} respectively. If $\eta=0$, then the following statements are equivalent:
 \begin{enumerate}[(i)]
 \item   $\mu_{\tilde{M},\tilde{D}}$ is a spectral measure;
 \item $\tilde{M}\in M_2(2\Bbb Z)$;
 \item $\tilde{M}\mathring{\mathcal{F}}_2^2\subset\mathbb{Z}^2$;
 \item $(\tilde{M},\tilde{D})$ is admissible.
 \end{enumerate}
\end{thm}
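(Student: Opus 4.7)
My plan is to prove the cycle $(\mathrm{iii}) \Leftrightarrow (\mathrm{ii}) \Rightarrow (\mathrm{iv}) \Rightarrow (\mathrm{i}) \Rightarrow (\mathrm{iii})$, in which only the last implication carries the substantive content.

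For the easy direction, (ii) $\Leftrightarrow$ (iii) is immediate since the three elements of $\mathring{\mathcal{F}}_2^2$ are mapped by $\tilde{M}$ to half of each column of $\tilde{M}$ and half their sum, so the inclusion $\tilde{M}\mathring{\mathcal{F}}_2^2\subset\mathbb{Z}^2$ is equivalent to every entry of $\tilde{M}$ being even. For (ii) $\Rightarrow$ (iv), I would first check by direct calculation that the oddness of $\alpha,\beta$ forces $\mathring{\mathcal{F}}_2^2\subset Z(m_{\tilde{D}})$, where $m_{\tilde{D}}(\xi):=\tfrac{1}{4}\sum_{d\in\tilde{D}}e^{2\pi i\langle d,\xi\rangle}$. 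Writing $\tilde{M}=2N$ with $N\in M_2(\mathbb{Z})$, I then take $S:=N^T\{(0,0)^T,(1,0)^T,(0,1)^T,(1,1)^T\}\subset\mathbb{Z}^2$, so that for distinct $s,s'\in S$ the vector $\tilde{M}^{-T}(s-s')$ is, modulo $\mathbb{Z}^2$, one of the three elements of $\mathring{\mathcal{F}}_2^2$. Every off-diagonal entry of $H^{*}H$ is then a sum $\sum_{d\in\tilde{D}}e^{2\pi i\langle d,v\rangle}=4m_{\tilde{D}}(v)=0$, so $H$ is unitary. Implication (iv) $\Rightarrow$ (i) is Theorem \ref{th(DHL)}.

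The heart of the proof is (i) $\Rightarrow$ (iii), which I would attack by contraposition: assume some $v^{*}\in\mathring{\mathcal{F}}_2^2$ has $\tilde{M}v^{*}\notin\mathbb{Z}^2$ and aim to show $\mu_{\tilde{M},\tilde{D}}$ has no spectrum. Let $\Lambda$ (with $0\in\Lambda$) be a hypothetical spectrum. Using the functional equation $\hat{\mu}(\xi)=\overline{m_{\tilde{D}}(\tilde{M}^{-T}\xi)}\,\hat{\mu}(\tilde{M}^{-T}\xi)$, I would perform a Dutkay--Jorgensen style coset decomposition $\Lambda=\bigsqcup_{c\in C}(c+\tilde{M}^T\Lambda_c)$ for a complete set $C$ of residues modulo $\tilde{M}^T\mathbb{Z}^2$. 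Orthogonality in $\Lambda$ then translates into a Hadamard-type condition on the finite set $C$, while the Parseval identity $\sum_{\lambda\in\Lambda}|\hat{\mu}(\xi+\lambda)|^2\equiv 1$ imposes a completeness condition. Combined with the inclusion $\mathring{\mathcal{F}}_2^2\subset Z(m_{\tilde{D}})$, the plan is to show that these constraints cannot be simultaneously satisfied unless $\tilde{M}^T\mathring{\mathcal{F}}_2^2\subset\mathbb{Z}^2$, contradicting the assumption on $v^{*}$.

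The main obstacle is precisely this final step. The delicate point is that a spectrum could a priori live in progressively finer lattices $\tilde{M}^{-nT}\mathbb{Z}^2$, so the integrality condition $\tilde{M}\in 2M_2(\mathbb{Z})$ cannot simply be ``read off'' from $\Lambda$ by a naive lattice argument. Following the authors' remark that their technique is new and inspired by \cite{DL} on Moran measures, I expect to need a Moran-type decomposition of $\mu_{\tilde{M},\tilde{D}}=\ast_n\delta_{\tilde{M}^{-n}\tilde{D}}$ that tracks how the failure of $\tilde{M}$ to map $\mathring{\mathcal{F}}_2^2$ into $\mathbb{Z}^2$ propagates through consecutive levels of the infinite convolution, eventually producing an irreconcilable obstruction to the simultaneous validity of orthogonality and Parseval completeness.
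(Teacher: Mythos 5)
Your treatment of the three easy implications is correct and matches the paper: $(ii)\Leftrightarrow(iii)$ is the same direct computation, your set $S=N^{T}\{(0,0)^{T},(1,0)^{T},(0,1)^{T},(1,1)^{T}\}$ with $\tilde{M}=2N$ is exactly the paper's choice $\tilde{C}=\tilde{M}^{*}\mathcal{F}_2^2$, and the inclusion $\mathring{\mathcal{F}}_2^2\subset\mathcal{Z}(m_{\tilde{D}})$ (the paper's Proposition \ref{th(3.1)}(iii), resting on $\alpha,\beta$ odd) is verified the same way. However, for the implication $(i)\Rightarrow(iii)$ --- which is the entire substance of the theorem --- you have not given a proof. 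You describe a strategy (coset decomposition of a hypothetical spectrum, orthogonality plus Parseval constraints, a Moran-type refinement), and then explicitly concede that ``the main obstacle is precisely this final step'' and that you ``expect to need'' a further decomposition to produce the contradiction. That unresolved step is where all the work lies, so this is a genuine gap, not a stylistic difference.

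Two concrete points about why your sketch, as stated, would not close on its own. First, you propose decomposing $\Lambda$ modulo $\tilde{M}^{T}\mathbb{Z}^2$; the paper instead passes to the Moran measure $\mu_{A,\tilde{M},\tilde{D}}$ with the \emph{scalar} matrix $A=\mathrm{diag}(2\alpha\beta,2\alpha\beta)$ (Lemma \ref{th(2.4)} shows spectrality is preserved) and decomposes modulo $A$, because the zero set $\mathcal{Z}(m_{\tilde{D}})=\Theta_1\cup\Theta_2\cup\Theta_3$ has a clean arithmetic description precisely at the scale $2\alpha\beta$, not at the scale of $\tilde{M}^{T}$; this choice also disposes of your worry about spectra living in progressively finer lattices, since \eqref{3.4} forces any spectrum of $\mu_{A,\tilde{M},\tilde{D}}$ containing $0$ to lie in $\mathbb{Z}^2$. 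Second, the mechanism that actually extracts $\tilde{M}\in M_2(2\mathbb{Z})$ is absent from your outline: the paper builds a complete residue system $\mathcal{S}_\eta\oplus 2\alpha\beta\mathcal{T}_\eta$ of $A$ whose second factor mirrors the four-fold partition $\Theta_0,\Theta_1,\Theta_2,\Theta_3$ (Proposition \ref{th(3.3)}), proves via the Deng--Li equality lemma (Lemma \ref{th(DL)}) that one may select, for each residue $s$, any single branch $\mathcal{T}_{\eta,i_s}$ and still obtain a spectrum $\Gamma$ of $\mu_{\tilde{M},\tilde{D}}$ (Lemma \ref{th(3.5)}), and then feeds $2\alpha\beta\tilde{M}^{*-1}\Gamma$ back in as a spectrum of the Moran measure; choosing the branches for $s\neq 0$ to avoid $\bigcup_{j}2\alpha\beta\tilde{M}^{*}\mathcal{T}_{\eta,j}+2\mathbb{Z}^2$ forces $2\alpha\beta\tilde{M}^{*}\ell_i\in 2\mathbb{Z}^2$ for representatives $\ell_i$ of all three nonzero classes modulo $2\mathbb{Z}^2$, which is what yields $\tilde{M}^{*}\in M_2(2\mathbb{Z})$. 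Without this selection-and-bootstrap argument (or a genuine substitute for it), the contradiction you aim for is not established.
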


On the other hand, if $\eta>0$  in $\tilde{D}$, the form of $\tilde{M}$ is different from that in the case $\eta=0$.

\begin{thm}\label{th(1.6)}
Let  $\mu_{\tilde{M},\tilde{D}}$  be defined by \eqref{1.1}, where $\tilde{M}$ and $\tilde{D}$ are given by \eqref{1.4} and \eqref{1.5} respectively. If $\eta>0$, then  $\mu_{\tilde{M},\tilde{D}}$ is a spectral measure if and only if $\tilde{M}=\begin{pmatrix}
\tilde{ a}&\tilde{b} \\
\tilde{c}&\tilde{d}
\end{pmatrix}$ satisfies $\tilde{a},\tilde{d}\in2\Bbb Z$ and $2^{\eta+1}| \tilde{c}$.
\end{thm}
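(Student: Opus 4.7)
The plan is to reduce Theorem~\ref{th(1.6)} to Theorem~\ref{th(1.5)} via the diagonal conjugation $Q=\mathrm{diag}(2^\eta,1)$. A direct calculation gives
$$\hat{M}:=Q\tilde{M}Q^{-1}=\begin{pmatrix}\tilde{a} & 2^\eta\tilde{b}\\ 2^{-\eta}\tilde{c} & \tilde{d}\end{pmatrix},\qquad Q\tilde{D}=2^\eta\tilde{D}_0,$$
where $\tilde{D}_0:=\{(0,0)^T,(\alpha,0)^T,(\omega,\beta)^T,(-\alpha-\omega,-\beta)^T\}$ has exactly the form of \eqref{1.5} with $\eta=0$. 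By Lemma~\ref{th(2.2)} (similarity invariance of spectrality), combined with the trivial invariance of spectrality under uniform scaling of the digit set, $\mu_{\tilde{M},\tilde{D}}$ is a spectral measure if and only if $\mu_{\hat{M},\tilde{D}_0}$ is.

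For the sufficiency, assume $\tilde{a},\tilde{d}\in 2\mathbb{Z}$ and $2^{\eta+1}\mid\tilde{c}$. Since $\eta\geq 1$, every entry of $\hat{M}$ is an even integer: $\tilde{a}$ and $\tilde{d}$ by hypothesis, $2^\eta\tilde{b}$ because $\eta\geq 1$, and $2^{-\eta}\tilde{c}$ because $2^{\eta+1}\mid\tilde{c}$. Hence $\hat{M}\in M_2(2\mathbb{Z})$, and Theorem~\ref{th(1.5)} applied to the pair $(\hat{M},\tilde{D}_0)$ in its $\eta=0$ regime yields that $\mu_{\hat{M},\tilde{D}_0}$ is spectral, hence so is $\mu_{\tilde{M},\tilde{D}}$.

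For the necessity, the delicate point is that $\hat{M}$ is only a matrix with rational entries a priori, so Theorem~\ref{th(1.5)} cannot be invoked on $(\hat{M},\tilde{D}_0)$ without additional input. The plan is to first show that spectrality of $\mu_{\tilde{M},\tilde{D}}$ forces $2^\eta\mid\tilde{c}$, so that $\hat{M}\in M_2(\mathbb{Z})$, and then apply the (i)$\Rightarrow$(ii) direction of Theorem~\ref{th(1.5)} to conclude $\hat{M}\in M_2(2\mathbb{Z})$, which unwinds to $\tilde{a},\tilde{d}\in 2\mathbb{Z}$ together with $2^{\eta+1}\mid\tilde{c}$. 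To establish the divisibility $2^\eta\mid\tilde{c}$, I would analyze the zero set
$$Z(m_{\tilde{D}})=\bigl\{\xi\in\mathbb{R}^2:\alpha\xi_1\in\tfrac{1}{2}\mathbb{Z},\ \omega\xi_1+2^\eta\beta\xi_2\in\tfrac{1}{2}\mathbb{Z},\ \xi\notin\mathbb{Z}^2\bigr\},$$
use that for any spectrum $\Lambda\ni 0$ and each $\lambda\in\Lambda\setminus\{0\}$ some iterate $(\tilde{M}^T)^{-k}\lambda$ must lie in $Z(m_{\tilde{D}})\pmod{\mathbb{Z}^2}$, and argue that the $2^\eta$ factor in the second coordinate of the digits creates an obstruction to completeness of $E_\Lambda$ unless the second row of $\tilde{M}$ absorbs a factor of $2^\eta$ into $\tilde{c}$.

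The main obstacle is executing this last step quantitatively: converting the qualitative obstruction into a strict inequality $\sum_{\lambda\in\Lambda}|\hat{\mu}_{\tilde{M},\tilde{D}}(\xi+\lambda)|^2<1$ on a set of positive Lebesgue measure for every candidate countable $\Lambda\subset\mathbb{R}^2$ whenever $2^\eta\nmid\tilde{c}$. This is precisely the place where the new methodology announced in the introduction, in the spirit of the Moran-type tree analysis of \cite{DL}, must be carried out.
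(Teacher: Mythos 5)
Your reduction to Theorem \ref{th(1.5)} and your proof of sufficiency are correct and coincide with the paper's (the paper conjugates by $\mathrm{diag}(1,2^{-\eta})$ rather than $\mathrm{diag}(2^{\eta},1)$, which lands directly on the digit set $\bar{D}$ of \eqref{3.30} without the extra rescaling step, but this is cosmetic). The logical skeleton of your necessity argument is also sound: once $2^{\eta}\mid\tilde{c}$ is known, $\hat{M}\in M_2(\mathbb{Z})$ and the implication (i)$\Rightarrow$(ii) of Theorem \ref{th(1.5)} upgrades this to $\hat{M}\in M_2(2\mathbb{Z})$, which unwinds to $\tilde{a},\tilde{d}\in 2\mathbb{Z}$ and $2^{\eta+1}\mid\tilde{c}$.

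The genuine gap is the divisibility $2^{\eta}\mid\tilde{c}$ itself, which you assert but do not prove, and which is the entire technical content of the theorem (the paper isolates it as Lemma \ref{th(3.7)}). Your proposed mechanism --- that each $\lambda\in\Lambda\setminus\{0\}$ has some iterate $(\tilde{M}^{*})^{-k}\lambda$ in $\mathcal{Z}(m_{\tilde{D}})$ --- only encodes orthogonality and cannot by itself yield the obstruction: there exist plenty of infinite orthogonal sets here for any $\tilde{M}$, so the contradiction must come from completeness, i.e., from $Q_{\mu,\Lambda}\equiv 1$. The paper's route is quite different from what you sketch. Writing $\tilde{c}=2^{\tau}c'$ with $c'$ odd and $\tau<\eta$ (the case $\tau=\eta$ being disposed of by Theorem \ref{th(1.5)} exactly as in your plan), it conjugates by $\mathrm{diag}(1,2^{-\tau})$ to an integer pair $(M_1,D_1)$, passes to the Moran measure $\mu_{A_1,M_1,D_1}$ of \eqref{2.5} via Lemma \ref{th(2.4)}, decomposes any spectrum of it along the complete residue system $\mathcal{S}_{\eta-\tau}\oplus 2^{\eta-\tau+1}\alpha\beta\mathcal{T}_{\eta-\tau}$ as in \eqref{3.11}, and uses Lemma \ref{th(3.5)} (whose completeness half rests on Lemma \ref{th(DL)} from \cite{DL}) to manufacture a spectrum $\tilde{\Gamma}$ of $\mu_{M_1,D_1}$ avoiding a prescribed residue class $\Phi_1$ or $\Phi_3$ modulo $2\mathbb{Z}^2$. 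Since $2^{\eta-\tau+1}\alpha\beta M_1^{*-1}\tilde{\Gamma}$ must again be a spectrum, a pigeonhole on the decomposition forces $2^{\eta-\tau+1}\alpha\beta M_1^{*}\ell\in 2\mathbb{Z}^2$ for some $\ell\in\mathcal{T}_{\eta-\tau,2}$, whose first coordinate is congruent to $c'\alpha$ modulo $2$ --- odd, a contradiction. None of this machinery (the residue-system decomposition, the freedom to choose $i_s$ in Lemma \ref{th(3.5)}, the parity bookkeeping of Claims 1 and 2) appears in your proposal, and you explicitly flag the quantitative step as unexecuted; as it stands the necessity is therefore not established.
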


We now give a brief explanation of the proofs of Theorems \ref{th(1.5)} and  \ref{th(1.6)}. The main  technical difficulty in the proofs   lies in ``$(i)\Rightarrow(ii)$" of Theorem \ref{th(1.5)} and the necessity of Theorem \ref{th(1.6)}. More precisely, the key point  is to construct a Moran measure $\mu_{A,\tilde{M},\tilde{D}}$ (see \eqref{3.1}) so that it has the same spectral property as $\mu_{\tilde{M},\tilde{D}}$. For the matrix $A$, we need to cleverly  describe its complete residue system (Proposition \ref{th(3.3)}). Furthermore, we  carefully investigate the structure of the spectrum of $\mu_{A,\tilde{M},\tilde{D}}$ (see \eqref{3.11}).  And then we get a property of decomposition on the spectrum of $\mu_{\tilde{M},\tilde{D}}$  under the assumption  that $\mu_{A,\tilde{M},\tilde{D}}$  is a spectral measure (Lemma \ref{th(3.5)}). With the help of these, the proof becomes within reach.

The paper is organized as follows. In Section 2, we introduce some basic definitions and lemmas. In Section 3, we focus on proving Theorems \ref{th(1.5)} and \ref{th(1.6)}. Finally, we prove Theorems  \ref{th(main)} and \ref{th(main2)}, and give some concluding remarks in Section 4.

\section{\bf Preliminaries\label{sect.2}}

The aim in this section is to collect some necessary definitions and basic facts for self-affine measures. For the self-affine measure $\mu_{M,D}$ defined by \eqref{1.1}, the Fourier transform of $\mu_{M,D}$ is defined as usual,
\begin{equation}\label{2.1}
\hat{\mu}_{M,D}(\xi)=\int e^{2\pi i\langle x,\xi\rangle }d\mu_{M,D}(x)=\prod_{j=1}^\infty m_D({M^{*}}^{-{j}}\xi), \quad  \xi\in\mathbb{R}^n,
\end{equation}
where $M^*$ denotes the transpose of $M$ and $m_{D}(\cdot)=\frac{1}{\#D}\sum_{d\in D}{e^{2\pi i\langle d,\cdot\rangle}}$ is the mask polynomial of $D$.
It is easy to see that $m_D(\cdot)$ is a $\mathbb{Z}^n$-periodic function if $D\subset \mathbb{Z}^n$.

Let $\mathcal{Z}(f)=\{x:f(x)=0\}$ be the zero set  of a function $f$.
It follows from \eqref{2.1} that
\begin{equation}\label{2.2}
\mathcal{Z}(\hat{\mu}_{M, D})=\bigcup_{j=1}^{\infty}M^{*j}(\mathcal{Z}(m_D)).
\end{equation}
For any distinct $\lambda_1,\lambda_2\in \mathbb{R}^n$, the orthogonality condition means that
$$
0=\langle e^{2\pi i \langle \lambda_1,x\rangle},e^{2\pi i \langle \lambda_2,x\rangle}\rangle_{L^2(\mu_{M,D})}=\int e^{2\pi i \langle\lambda_1-\lambda_2,x\rangle}d\mu_{M,D}(x)=\hat{\mu}_{M,D}(\lambda_1-\lambda_2).
$$
It is easy to see that for a countable subset $\Lambda\subset\mathbb{R}^n$,  $E_\Lambda=\{e^{2\pi i\langle\lambda,x\rangle}:\lambda\in\Lambda\}$ is an orthogonal family of $L^2(\mu_{M,D})$ if and only if
 \begin{equation}\label{2.3}
 (\Lambda-\Lambda)\setminus\{0\}\subset\mathcal{Z}(\hat{\mu}_{M,D}).
\end{equation}
We call $\Lambda$  an {\it orthogonal set} (respectively, {\it spectrum}) of $\mu_{M,D}$ if $E_\Lambda$ forms an orthogonal family (respectively, Fourier basis) of $L^2(\mu_{M,D})$. As the properties of spectra are invariant under  a translation, one may assume that $0\in\Lambda$, and hence $\Lambda\subset\Lambda-\Lambda$.

In a number of applications, one encounters a measure $\mu$ and a subset $\Lambda$  such that the functions $e^{2\pi i\langle\lambda,x\rangle}$ indexed by $\Lambda$ are orthogonal in $L^2(\mu)$, but a separate argument is needed in
order to show that the family is complete. Let
\begin{equation}\label{2.4}
Q_{\mu,\Lambda}(\xi)=\sum_{\lambda\in\Lambda}|{\hat{\mu}(\xi+\lambda)}|^2,\quad \xi\in\mathbb{R}^n.
\end{equation}
The well known result of Jorgensen and Pedersen \cite[Lemma 4.2]{Jorgensen-Pedersen_1998} shows that $Q_{\mu,\Lambda}(\xi)$ is an entire function if $\Lambda$  is an orthogonal set of $\mu$. The following provides a universal test which allows us to decide whether an orthogonal set $\Lambda$ is a spectrum of  the measure $\mu$.

\begin{thm}\cite{Jorgensen-Pedersen_1998}\label{th(JP)}
Let $\mu$ be a Borel probability measure with compact support on $\mathbb{R}^n$, and let $\Lambda\subset\mathbb{R}^n$ be a countable set. Then
 \begin{enumerate}[(i)]
 	\item   $\Lambda$ is an orthogonal set of $\mu$ if and only if $Q_{\mu,\Lambda}(\xi)\leq1$ for $\xi\in\mathbb{R}^n$;
 	\item
$\Lambda$ is a spectrum of $\mu$  if and only if $Q_{\mu,\Lambda}(\xi)\equiv1$ for $\xi\in\mathbb{R}^n$.
 \end{enumerate}
\end{thm}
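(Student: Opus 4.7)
The plan is to reduce everything to Bessel's inequality and Parseval's identity applied to a well-chosen test family in $L^2(\mu)$. For each $\xi\in\mathbb{R}^n$, introduce the bounded function $f_\xi(x):=e^{-2\pi i\langle \xi,x\rangle}$; since $\mu$ is a probability measure with compact support, $f_\xi\in L^2(\mu)$ with $\|f_\xi\|_{L^2(\mu)}^2=1$. A direct computation gives
$$
\langle f_\xi,e^{2\pi i\langle \lambda,\cdot\rangle}\rangle_{L^2(\mu)}
=\int e^{-2\pi i\langle \xi+\lambda,x\rangle}\,d\mu(x)=\overline{\hat\mu(\xi+\lambda)},
$$
so that $|\langle f_\xi,e_\lambda\rangle|^2=|\hat\mu(\xi+\lambda)|^2$ and hence $Q_{\mu,\Lambda}(\xi)=\sum_{\lambda\in\Lambda}|\langle f_\xi,e_\lambda\rangle|^2$. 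This identifies $Q_{\mu,\Lambda}(\xi)$ as the Bessel-type sum of squared Fourier coefficients of $f_\xi$ against the candidate system $E_\Lambda$, which is the whole point of the argument.

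For part (i), the forward direction is immediate: if $\Lambda$ is an orthogonal set then $E_\Lambda$ is an orthonormal system (note $\|e_\lambda\|_{L^2(\mu)}=1$ since $\mu$ is a probability measure), so Bessel's inequality applied to $f_\xi$ gives $Q_{\mu,\Lambda}(\xi)\le\|f_\xi\|^2=1$. For the converse, I would specialize $\xi=-\lambda_0$ for an arbitrary $\lambda_0\in\Lambda$; the $\lambda=\lambda_0$ term of $Q_{\mu,\Lambda}(-\lambda_0)$ equals $|\hat\mu(0)|^2=1$, and the remaining nonnegative terms must therefore all vanish, forcing $\hat\mu(\lambda-\lambda_0)=0$ for every $\lambda\ne\lambda_0$. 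By the characterization \eqref{2.3} this is exactly the orthogonality of $E_\Lambda$.

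For part (ii), assume first that $\Lambda$ is a spectrum. Then $E_\Lambda$ is an orthonormal basis and Parseval's identity applied to $f_\xi$ yields $Q_{\mu,\Lambda}(\xi)=\|f_\xi\|^2=1$ for every $\xi\in\mathbb{R}^n$. Conversely, if $Q_{\mu,\Lambda}\equiv 1$ then by (i) the system $E_\Lambda$ is orthonormal, and I would prove completeness by decomposing $f_\xi=P_\xi+g_\xi$ where $P_\xi$ is the projection onto the closed linear span of $E_\Lambda$ and $g_\xi\perp E_\Lambda$; then
$$
1=\|f_\xi\|^2=\sum_{\lambda\in\Lambda}|\langle f_\xi,e_\lambda\rangle|^2+\|g_\xi\|^2=Q_{\mu,\Lambda}(\xi)+\|g_\xi\|^2=1+\|g_\xi\|^2,
$$
so $g_\xi=0$ and $f_\xi$ lies in $\overline{\mathrm{span}}\,E_\Lambda$ for every $\xi\in\mathbb{R}^n$.

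The remaining step, and the main obstacle to phrase cleanly, is to upgrade ``every $f_\xi$ is in $\overline{\mathrm{span}}\,E_\Lambda$'' to ``$E_\Lambda$ is complete in $L^2(\mu)$''. I would invoke the Stone--Weierstrass theorem on $C(\mathrm{supp}\,\mu)$: the algebra generated by $\{e^{-2\pi i\langle \xi,\cdot\rangle}:\xi\in\mathbb{R}^n\}$ contains the constants, separates points, and is closed under complex conjugation (since $\overline{f_\xi}=f_{-\xi}$), hence is uniformly dense in $C(\mathrm{supp}\,\mu)$. Compactness of $\mathrm{supp}\,\mu$ together with finiteness of $\mu$ lets me pass from uniform convergence to $L^2(\mu)$-convergence, so $\{f_\xi:\xi\in\mathbb{R}^n\}$ has dense linear span in $L^2(\mu)$. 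Combined with the previous paragraph this gives $\overline{\mathrm{span}}\,E_\Lambda=L^2(\mu)$, completing the proof. Throughout, the only delicate point is the measurability/integrability of $Q_{\mu,\Lambda}$, but since each $f_\xi$ is bounded and the sum is dominated by $1$, every step is justified term-by-term.
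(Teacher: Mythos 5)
The paper does not prove this statement --- it is quoted verbatim from Jorgensen and Pedersen \cite{Jorgensen-Pedersen_1998} with no argument supplied --- so there is no internal proof to compare against. Your proposal is correct and is essentially the standard Jorgensen--Pedersen argument: the identification $Q_{\mu,\Lambda}(\xi)=\sum_{\lambda}|\langle f_\xi,e_\lambda\rangle|^2$, Bessel/Parseval for the two forward directions, the specialization $\xi=-\lambda_0$ for the converse of (i), and Stone--Weierstrass plus density of $C(\mathrm{supp}\,\mu)$ in $L^2(\mu)$ (a standard fact for finite regular Borel measures, which you use implicitly and could state explicitly) to upgrade ``each $f_\xi$ lies in $\overline{\mathrm{span}}\,E_\Lambda$'' to completeness in the converse of (ii).
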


The following lemma indicates that the spectral property of
$\mu_{M,D}$ is invariant under a similarity transformation.

\begin{lemma}\cite{Dutkay-Jorgensen_2007}\label{th(2.2)}
Let $D_1, D_2\subset \Bbb R^n$ be two finite digit sets with the same cardinality,  and let $M_1, M_2\in M_n(\Bbb R)$ be two expansive real matrices. If there exists a matrix  $Q\in M_n(\mathbb{R})$  such that $M_2=QM_1Q^{-1}$ and $D_2=QD_1$,  then $\mu_{M_1,D_1}$ is a spectral measure with spectrum $\Lambda$ if and only if  $\mu_{M_2,D_2}$ is a spectral measure with spectrum $Q^{*-1}\Lambda$.
\end{lemma}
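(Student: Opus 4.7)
The plan is to reduce the statement to the Jorgensen--Pedersen test (Theorem \ref{th(JP)}) by establishing a clean change-of-variables relationship between $\mu_{M_1,D_1}$ and $\mu_{M_2,D_2}$. The underlying geometric fact is that conjugating the IFS by $Q$ conjugates its invariant measure; then passing to Fourier transforms turns the conjugation by $Q$ on the space side into precomposition by $Q^{*}$ on the frequency side, which is exactly the reason the spectrum gets multiplied by $Q^{*-1}$.

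First I would show that the pushforward $\nu := \mu_{M_1,D_1}\circ Q^{-1}$ coincides with $\mu_{M_2,D_2}$. Writing $\phi_d(x)=M_1^{-1}(x+d)$ for $d\in D_1$ and $\psi_{d'}(y)=M_2^{-1}(y+d')$ for $d'\in D_2$, the identities $M_2^{-1}=QM_1^{-1}Q^{-1}$ and $D_2=QD_1$ give
\begin{equation*}
\psi_{Qd}(y) = QM_1^{-1}Q^{-1}y + QM_1^{-1}d = Q\bigl(\phi_d(Q^{-1}y)\bigr),
\end{equation*}
so $\psi_{Qd}=Q\circ\phi_d\circ Q^{-1}$. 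Hence
\begin{equation*}
\frac{1}{N}\sum_{d'\in D_2}\nu\circ\psi_{d'}^{-1}
=\frac{1}{N}\sum_{d\in D_1}\mu_{M_1,D_1}\circ\phi_d^{-1}\circ Q^{-1}
=\mu_{M_1,D_1}\circ Q^{-1}=\nu,
\end{equation*}
and the uniqueness of the invariant probability measure in \eqref{1.1} forces $\nu=\mu_{M_2,D_2}$.

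Next I would translate this into the Fourier picture. A direct change of variables gives
\begin{equation*}
\hat{\mu}_{M_2,D_2}(\xi)=\int e^{2\pi i\langle Qx,\xi\rangle}\,d\mu_{M_1,D_1}(x)=\hat{\mu}_{M_1,D_1}(Q^{*}\xi),\qquad \xi\in\mathbb{R}^n.
\end{equation*}
Setting $\Lambda':=Q^{*-1}\Lambda$, for any $\xi\in\mathbb{R}^n$ one then obtains
\begin{equation*}
Q_{\mu_{M_2,D_2},\Lambda'}(\xi)
=\sum_{\lambda\in\Lambda}\bigl|\hat{\mu}_{M_2,D_2}(\xi+Q^{*-1}\lambda)\bigr|^2
=\sum_{\lambda\in\Lambda}\bigl|\hat{\mu}_{M_1,D_1}(Q^{*}\xi+\lambda)\bigr|^2
=Q_{\mu_{M_1,D_1},\Lambda}(Q^{*}\xi).
\end{equation*}
Invoking Theorem \ref{th(JP)}(ii), the left-hand side is identically $1$ in $\xi$ if and only if the right-hand side is identically $1$ in $\xi$ (since $Q^{*}$ is a bijection on $\mathbb{R}^n$), which proves the stated equivalence of spectra. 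There is no genuine obstacle here; the only point requiring a small amount of care is bookkeeping the transpose $Q^{*}$ versus the inverse $Q^{-1}$, and verifying the pushforward computation in the first step so that the Fourier relation comes out with $Q^{*}$ on the correct side.
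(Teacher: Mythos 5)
Your proof is correct: the pushforward identity $\mu_{M_1,D_1}\circ Q^{-1}=\mu_{M_2,D_2}$ via uniqueness of the invariant measure, the resulting relation $\hat{\mu}_{M_2,D_2}(\xi)=\hat{\mu}_{M_1,D_1}(Q^{*}\xi)$, and the conclusion through Theorem \ref{th(JP)} all check out, including the transpose bookkeeping. The paper itself does not prove this lemma (it cites \cite{Dutkay-Jorgensen_2007}), but your argument is the standard one and follows exactly the same template the authors use for their own Lemma \ref{th(2.4)}, so nothing further is needed.
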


The following result is a known fact, which was proved  in \cite{Dutkay-Haussermann-Lai_2019} and will be used in the  proof of Proposition \ref{th(3.3)}.

\begin{lemma}\label{th(2.3)}
Let $M\in M_n(\mathbb{Z})$ be an expansive integer matrix, and let $D,S\subset\mathbb{Z}^n$ be two finite digit
sets with the same cardinality. Then  the following three affirmations are equivalent:
 \begin{enumerate}[(i)]
 	\item   $(M,D,S)$ is a  Hadamard triple;
 \item $m_{D}(M^{*-1}(s_1-s_2))=0$ for any distinct $s_1, s_2\in S$;
 \item $(\delta_{M^{-1}D},S)$ is a spectral pair.
 \end{enumerate}
\end{lemma}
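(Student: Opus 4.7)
The plan is to prove the equivalences in the cyclic form $(i)\Leftrightarrow(ii)\Leftrightarrow(iii)$, each by a direct computation with the mask polynomial.

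For $(i)\Leftrightarrow(ii)$, I would compute the $(s_1,s_2)$-entry of $H^*H$ explicitly. Since
\[
(H^*H)_{s_1,s_2}=\frac{1}{N}\sum_{d\in D}e^{-2\pi i\langle M^{-1}d,s_1\rangle}e^{2\pi i\langle M^{-1}d,s_2\rangle}=\frac{1}{N}\sum_{d\in D}e^{2\pi i\langle d, M^{*-1}(s_2-s_1)\rangle}=m_D(M^{*-1}(s_2-s_1)),
\]
the diagonal entries are automatically $1$ (as $N$ unit terms are averaged), and unitarity $H^*H=I$ is exactly the vanishing of all off-diagonal entries, i.e., condition $(ii)$.

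For $(ii)\Leftrightarrow(iii)$, the key observation is the identity
\[
\widehat{\delta_{M^{-1}D}}(\xi)=\frac{1}{N}\sum_{d\in D}e^{2\pi i\langle M^{-1}d,\xi\rangle}=m_D(M^{*-1}\xi).
\]
Using \eqref{2.3}, the set $S$ is orthogonal for $\delta_{M^{-1}D}$ precisely when $\widehat{\delta_{M^{-1}D}}(s_1-s_2)=0$ for all distinct $s_1,s_2\in S$, which is condition $(ii)$. This gives the direction $(iii)\Rightarrow(ii)$ trivially, and shows that $(ii)$ yields at least orthogonality in $(iii)$.

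The only remaining point is to upgrade orthogonality into spectrality in the step $(ii)\Rightarrow(iii)$. Since $M$ is invertible, the atoms $\{M^{-1}d:d\in D\}$ are $N$ distinct points of equal mass $1/N$, so $L^2(\delta_{M^{-1}D})$ is an $N$-dimensional Hilbert space. The family $E_S=\{e^{2\pi i\langle s,\cdot\rangle}:s\in S\}$ consists of exactly $\#S=N$ mutually orthogonal nonzero vectors in this space, hence forms a basis. This completes $(ii)\Rightarrow(iii)$. No essential obstacle arises; the only subtlety worth highlighting is the cardinality/dimension count that turns orthogonality into completeness, which depends crucially on the assumption $\#D=\#S=N$ together with the injectivity of $M^{-1}$ on $D$.
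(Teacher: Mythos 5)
Your proof is correct. The paper itself does not prove Lemma \ref{th(2.3)}: it states it as a known fact and cites \cite{Dutkay-Haussermann-Lai_2019}, so there is no in-paper argument to compare against. Your computation of $(H^*H)_{s_1,s_2}=m_D(M^{*-1}(s_2-s_1))$, the identity $\widehat{\delta_{M^{-1}D}}=m_D\circ M^{*-1}$, and the dimension count $\dim L^2(\delta_{M^{-1}D})=N$ (using injectivity of $M^{-1}$ on $D$ and $\#S=N$) constitute exactly the standard argument one finds in the cited reference, with no gaps.
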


Recall that $\mu_{M,D}$  is defined by \eqref{1.1}, we  let $A$ be a nonsingular matrix and let the Moran measure
\begin{equation}\label{2.5}
\mu_{A,M,D}=\delta_{A^{-1}D}*\delta_{A^{-1}M^{-1}D}*\delta_{A^{-1}M^{-2}D}*\cdots.
\end{equation}
It is clear that $\mu_{A,M,D}=\mu_{M,D}$ if $A=M$. The following lemma indicates the spectrality of $\mu_{M,D}$ is independent of $A$. The proof is the same as that of \cite[Lemma 2.6]{DHLY} and  \cite[Lemma 3.1]{DL}. For the convenience of readers, we include the proof  here.

\begin{lemma}\label{th(2.4)}
Let $A$ be a nonsingular matrix, and let
$\mu_{A,M,D}$ be  defined by \eqref{2.5}. Then
$$\mu_{M,D}=\mu_{A,M,D}\circ(A^{-1}M).$$
Moreover, $(\mu_{M,D},\Lambda)$ is a spectral pair if and only if $(\mu_{A,M,D},A^*M^{*-1}\Lambda)$ is a spectral pair.
\end{lemma}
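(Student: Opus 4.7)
The plan is to verify the measure identity by comparing Fourier transforms and then transfer the spectral property through the induced unitary change of variables on $L^{2}$.

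First, I would compute $\hat{\mu}_{A,M,D}$ directly from \eqref{2.5}: each factor $\delta_{A^{-1}M^{-k}D}$ is a uniform probability measure on a finite set, so its Fourier transform equals $m_{D}(M^{*-k}A^{*-1}\xi)$, and hence
\[
\hat{\mu}_{A,M,D}(\xi)=\prod_{k=0}^{\infty} m_{D}(M^{*-k}A^{*-1}\xi).
\]
A simple index shift $k\mapsto k-1$ then gives $\hat{\mu}_{A,M,D}(A^{*}M^{*-1}\xi)=\prod_{j=1}^{\infty}m_{D}(M^{*-j}\xi)=\hat{\mu}_{M,D}(\xi)$ by \eqref{2.1}. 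Since for any nonsingular linear $T$ the push-forward satisfies $\widehat{T_{*}\nu}(\xi)=\hat{\nu}(T^{*}\xi)$, this equality is exactly the statement that $\mu_{M,D}$ is the push-forward of $\mu_{A,M,D}$ under $M^{-1}A=(A^{-1}M)^{-1}$, which in the $\circ$-notation of \eqref{1.1} reads $\mu_{M,D}=\mu_{A,M,D}\circ(A^{-1}M)$.

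For the spectral equivalence, I would use the measure identity to set up the canonical unitary isomorphism $U\colon L^{2}(\mu_{M,D})\to L^{2}(\mu_{A,M,D})$ by $Uf=f\circ(M^{-1}A)$. A direct change of variables shows that $U$ is an isometric bijection, and
\[
U\bigl(e^{2\pi i\langle \lambda,y\rangle}\bigr)(x)=e^{2\pi i\langle \lambda,M^{-1}Ax\rangle}=e^{2\pi i\langle A^{*}M^{*-1}\lambda,x\rangle},
\]
so $E_{\Lambda}$ is an orthonormal basis of $L^{2}(\mu_{M,D})$ if and only if $E_{A^{*}M^{*-1}\Lambda}$ is an orthonormal basis of $L^{2}(\mu_{A,M,D})$, which is the desired biconditional. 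Alternatively one may transfer the test function $Q_{\mu,\Lambda}$ of \eqref{2.4} using $\hat{\mu}_{M,D}(\xi)=\hat{\mu}_{A,M,D}(A^{*}M^{*-1}\xi)$ and invoke Theorem~\ref{th(JP)}.

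I do not anticipate a genuine obstacle here: this is essentially a bookkeeping lemma, and the expansivity of $M$ makes all the infinite products converge uniformly on compacta, so the formal identities are automatically rigorous. The only care needed is to keep the transposes and inverses aligned with the paper's push-forward convention for $\mu\circ T$ as it appears in \eqref{1.1}, which is the natural place for a sign or inversion error to creep in.
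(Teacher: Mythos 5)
Your proposal is correct, and the first half coincides with the paper's argument: the paper likewise writes $\hat{\mu}_{A,M,D}(A^{*}M^{*-1}\xi)=m_D(M^{*-1}\xi)\prod_{j=1}^{\infty}m_D(M^{*-j-1}\xi)=\hat{\mu}_{M,D}(\xi)$ and concludes $\mu_{M,D}=\mu_{A,M,D}\circ(A^{-1}M)$ by uniqueness of the Fourier transform; your push-forward bookkeeping ($\mu\circ T$ being push-forward under $T^{-1}$, so the relevant map is $M^{-1}A$) matches the paper's convention. For the spectral equivalence the routes differ slightly in mechanism: your primary argument builds the unitary $U\colon L^{2}(\mu_{M,D})\to L^{2}(\mu_{A,M,D})$, $Uf=f\circ(M^{-1}A)$, and observes that $U$ carries $e^{2\pi i\langle\lambda,\cdot\rangle}$ to $e^{2\pi i\langle A^{*}M^{*-1}\lambda,\cdot\rangle}$, which transfers orthonormal bases directly and needs no completeness criterion; the paper instead substitutes the identity $\hat{\mu}_{M,D}(\xi)=\hat{\mu}_{A,M,D}(A^{*}M^{*-1}\xi)$ into $Q_{\mu,\Lambda}$ of \eqref{2.4} to get $Q_{\mu_{M,D},\Lambda}(\xi)=Q_{\mu_{A,M,D},A^{*}M^{*-1}\Lambda}(A^{*}M^{*-1}\xi)$ and then invokes Theorem~\ref{th(JP)} (the route you list as the alternative). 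The two are equivalent in content; your unitary argument is marginally more self-contained and also yields the corresponding equivalence for orthogonal sets, not just spectra, while the paper's $Q$-function computation keeps the proof in the same framework used throughout Section~3. No gaps.
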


\begin{proof}
Applying \eqref{2.1} and \eqref{2.5}, we have
\begin{align}\label{2.6}
\hat{\mu}_{A,M, D}(A^*M^{*-1}\xi)&=m_D(A^{*-1}A^*M^{*-1}\xi)\prod_{j=1}^\infty m_D({M^{*}}^{-{j}}A^{*-1}A^*M^{*-1}\xi) \nonumber \\
&=\prod_{j=1}^\infty m_D(M^{*-j}\xi) =\hat{\mu}_{M, D}(\xi).
\end{align}
Then $\mu_{M,D}=\mu_{A,M,D}\circ(A^{-1}M)$ by the uniqueness of Fourier transform.

Recall that $Q_{\mu,\Lambda}(\xi)$ is defined by \eqref{2.4}, then for $\xi\in\mathbb{R}^2$, it follows from \eqref{2.6} that
\begin{align*}
Q_{\mu_{M,D},\Lambda}(\xi)&=\sum_{\lambda\in\Lambda}|{\hat{\mu}_{M,D}(\xi+\lambda)}|^2 \nonumber \\
&=\sum_{\lambda\in\Lambda}|{\hat{\mu}_{A,M,D}(A^*M^{*-1}(\xi+\lambda))}|^2
\nonumber \\
&=\sum_{\lambda\in\Lambda}|{\hat{\mu}_{A,M,D}(A^*M^{*-1}\xi+A^*M^{*-1}\lambda)}|^2\nonumber \\
&=Q_{\mu_{A,M,D},A^*M^{*-1}\Lambda}(A^*M^{*-1}\xi).
\end{align*}
Hence the second assertion follows by Theorem \ref{th(JP)}.
\end{proof}

We conclude this section by recalling a useful lemma  in our investigation, which was proved by Deng et al. in \cite[Lemma 2.5]{DL}.

\begin{lemma}\label{th(DL)}
Let $p_{i,j}$ be positive numbers such that $\sum\limits_{j=1}^np_{i,j}=1$, and let $q_{i,j}$ be nonnegative numbers such that $\sum\limits_{i=1}^m\max\limits_{1\leq j\leq n} q_{i,j}\leq1$. Then $\sum\limits_{i=1}^m\sum\limits_{j=1}^np_{i,j}q_{i,j}=1$ if and only if $q_{i,1}=\cdots=q_{i,n}$ for $1\leq i\leq m$ and $\sum\limits_{i=1}^mq_{i,1}=1$.
\end{lemma}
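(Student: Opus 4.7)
The plan is to reduce the statement to a single chain of two standard inequalities and then read off the equality case. Both hypotheses enter in an essential way: the rows $(p_{i,j})_{j=1}^{n}$ are probability vectors with \emph{strictly positive} entries (this positivity is what drives the rigidity in the equality case), while the column-wise maxima satisfy the control $\sum_{i=1}^{m}\max_{1\le j\le n}q_{i,j}\le 1$.

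First I would dispose of the ``if'' direction, which is a one-line calculation: substituting $q_{i,j}=q_{i,1}$ and using $\sum_{j}p_{i,j}=1$ collapses the double sum to $\sum_{i}q_{i,1}=1$. There is no obstacle here.

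For the ``only if'' direction, I would introduce $M_i:=\max_{1\le j\le n}q_{i,j}$ and bound the weighted average of each row by $M_i$, producing the chain
\[
1=\sum_{i=1}^{m}\sum_{j=1}^{n}p_{i,j}q_{i,j}\le\sum_{i=1}^{m}\Bigl(M_i\sum_{j=1}^{n}p_{i,j}\Bigr)=\sum_{i=1}^{m}M_i\le 1.
\]
The hypothesis forces equality throughout, which yields simultaneously the two identities $\sum_{i=1}^{m}M_i=1$ and $\sum_{j=1}^{n}p_{i,j}q_{i,j}=M_i$ for every $i$.

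The one genuine technical step, and the step I would expect to warrant the most explicit argument, is the equality case of the row-wise bound $\sum_{j}p_{i,j}q_{i,j}\le M_i$. Since every weight $p_{i,j}$ is strictly positive and $q_{i,j}\le M_i$ for all $j$, the presence of even a single index $j_0$ with $q_{i,j_0}<M_i$ would already produce strict inequality in that row. Hence $q_{i,1}=\cdots=q_{i,n}=M_i$ for each $i$, and the identity $\sum_{i}M_i=1$ then reads $\sum_{i}q_{i,1}=1$, which is exactly the remaining conclusion.
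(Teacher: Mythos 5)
Your proof is correct and complete: the chain $1=\sum_{i}\sum_{j}p_{i,j}q_{i,j}\le\sum_{i}M_i\le 1$ forces equality, and the strict positivity of the $p_{i,j}$ correctly rigidifies each row to the constant $M_i$. Note that the paper itself offers no proof of this lemma --- it is quoted from Deng--Li \cite[Lemma 2.5]{DL} --- so there is no internal argument to compare against; your two-inequality argument is the natural one and fully fills the gap.
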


\section{\bf Proofs of Theorems \ref{th(1.5)} and \ref{th(1.6)}\label{sect.3}}

In this section, we  focus on proving Theorems \ref{th(1.5)} and \ref{th(1.6)}, that is, studying the spectrality of the measure $\mu_{\tilde{M},\tilde{D}}$, where $\tilde{M}$ and $\tilde{D}$ are given by \eqref{1.4} and \eqref{1.5} respectively.  For this purpose, we  first  give some  properties of $\mathcal{Z}(m_{\tilde{D}})$, and then investigate the structure of the spectrum of $\mu_{\tilde{M},\tilde{D}}$  under the assumption  that $\mu_{A,\tilde{M},\tilde{D}}$  is a spectral measure, where $\mu_{A,\tilde{M},\tilde{D}}$ is defined by \eqref{2.5}.  With these preparations, we will achieve our goal.

According to Lemma \ref{th(2.4)}, without loss of generality,   we  assume in the rest of the paper that
$$A=\begin{pmatrix}
2^{\eta+1}\alpha\beta&0 \\
0&2^{\eta+1}\alpha\beta
\end{pmatrix}.$$
The matrix $A$ will play a special role in the construction of the spectrum of $\mu_{\tilde{M},\tilde{D}}$. Consequently,
\begin{equation}\label{3.1}
\mu_{A,\tilde{M},\tilde{D}}=\delta_{\frac{1}{2^{\eta+1}\alpha\beta}\tilde{D}}*(\mu_{\tilde{M},\tilde{D}}\circ2^{\eta+1}\alpha\beta)\ \ {\rm and} \ \ \hat{\mu}_{A,\tilde{M},\tilde{D}}(\xi)=m_{\tilde{D}}(\frac{\xi}{2^{\eta+1}\alpha\beta})\hat{\mu}_{\tilde{M},\tilde{D}}(\frac{\xi}{2^{\eta+1}\alpha\beta}).
\end{equation}
It is known that
$m_{\tilde{D}}(x)=0$
if and only if
 \begin{equation}\label{3.2}
 \begin{cases}
\alpha x_{1}=\frac{1}{2}+k_1, \\
\omega x_{1}+2^\eta\beta x_{2}=k'_1,
\end{cases}
 \begin{cases}
\alpha x_{1}=k_2, \\
\omega x_{1}+2^\eta\beta x_{2}=\frac{1}{2}+k'_2,
\end{cases}  {\rm or} \ \
\begin{cases}
\alpha x_{1}=\frac{1}{2}+k_3,\\
\omega x_{1}+2^\eta\beta x_{2}=\frac{1}{2}+k'_3,
\end{cases}
\end{equation}
where $k_1, k_2, k_3,k_1^{'}, k_2^{'}, k_3^{'}\in\mathbb{Z}$. By a direct calculation, we have that
\begin{equation}\label{3.3}
\mathcal{Z}(m_{\tilde{D}})=\Theta_1\cup\Theta_2\cup\Theta_3,
\end{equation}
where
\begin{gather*}
\Theta_{1}=\left\{\frac{1}{2^{\eta+1}\alpha\beta}
\begin{pmatrix}
2^\eta(2k_1\beta+\beta) \\
2k_1^{^\prime}\alpha-2k_1\omega-\omega
\end{pmatrix}
  :k_1,k_1^{^\prime}\in\mathbb{Z}
  \right\},\\
\Theta_{2}=\left\{
\frac{1}{2^{\eta+1}\alpha\beta} \begin{pmatrix}
2^{\eta+1}k_2\beta \\
2k_2^{^\prime}\alpha-2k_2\omega+\alpha
\end{pmatrix}
  :k_2,k_2^{^\prime}\in\mathbb{Z}
  \right\},\\
\Theta_{3}=\left\{
\frac{1}{2^{\eta+1}\alpha\beta} \begin{pmatrix}
2^\eta(2k_3\beta+\beta) \\
2k_3^{^\prime}\alpha-2k_3\omega+\alpha-\omega
\end{pmatrix}
  :k_3,k_3^{^\prime}\in\mathbb{Z}
  \right\}.
 \end{gather*}
Define
$$
\Theta_{0}=\left\{\frac{1}{2^{\eta+1}\alpha\beta} \begin{pmatrix}
2^{\eta+1}k_0\beta \\
2k_0^{\prime}\alpha-2k_0\omega
\end{pmatrix}
  :k_0,k_0^{^\prime}\in\mathbb{Z}
  \right\}.
$$
We now make a detailed analysis on the zero set $\mathcal{Z}(m_{\tilde{D}})$ of $m_{\tilde{D}}$.
\begin{pro}\label{th(3.1)}
With the above notations,  the following statements hold.
\begin{enumerate}[(i)]
 	\item  $(\Theta_{i}-\Theta_{i})\cap\mathcal{Z}(m_{\tilde{D}})=\emptyset$ for any $i\in\{0,1,2,3\}$;
 	\item
$\Theta_{i}-\Theta_{j}\subset\mathcal{Z}(m_{\tilde{D}})$ for any distinct $i,j\in\{0,1,2,3\}$;
\item If $\eta=0$, then
$\mathring{\mathcal{F}}_2^2\subset\mathcal{Z}(m_{\tilde{D}})$,
where $\mathring{\mathcal{F}}_2^2$  is defined by \eqref{1.3}.
 \end{enumerate}
\end{pro}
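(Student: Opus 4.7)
The plan is to give all three parts a uniform treatment by recognizing the Klein four group structure hidden in the decomposition \eqref{3.3}. Introduce the linear map $T\colon\mathbb{R}^{2}\to\mathbb{R}^{2}$ defined by $T(x_{1},x_{2})=(\alpha x_{1},\,\omega x_{1}+2^{\eta}\beta x_{2})$. A direct reparametrization of the definitions of $\Theta_{0},\Theta_{1},\Theta_{2},\Theta_{3}$ shows that, for the coset representatives $c_{0}=(0,0)$, $c_{1}=(1/2,0)$, $c_{2}=(0,1/2)$, $c_{3}=(1/2,1/2)$, one has
\[
\Theta_{i}=T^{-1}\bigl(c_{i}+\mathbb{Z}^{2}\bigr),\qquad i=0,1,2,3.
\]
Since $\{c_{0},c_{1},c_{2},c_{3}\}$ is a complete set of coset representatives for $\frac{1}{2}\mathbb{Z}^{2}/\mathbb{Z}^{2}$, the $\Theta_{i}$ are pairwise disjoint, and their coset labels combine according to the group law of the Klein four group $\frac{1}{2}\mathbb{Z}^{2}/\mathbb{Z}^{2}$.

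For \textbf{(i)}, if $x,y\in\Theta_{i}$ then $T(x-y)=Tx-Ty\in\mathbb{Z}^{2}$, so $x-y\in\Theta_{0}$; because $\mathcal{Z}(m_{\tilde{D}})=\Theta_{1}\cup\Theta_{2}\cup\Theta_{3}$ by \eqref{3.3} is disjoint from $\Theta_{0}$, the claim follows. For \textbf{(ii)}, if $x\in\Theta_{i}$, $y\in\Theta_{j}$ with $i\neq j$, then $T(x-y)\in(c_{i}-c_{j})+\mathbb{Z}^{2}$, and $c_{i}-c_{j}$ is congruent modulo $\mathbb{Z}^{2}$ to one of $c_{1},c_{2},c_{3}$, so $x-y\in\Theta_{1}\cup\Theta_{2}\cup\Theta_{3}=\mathcal{Z}(m_{\tilde{D}})$.

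For \textbf{(iii)}, set $\eta=0$ and compute $T$ on each nonzero element of $\mathcal{F}_{2}^{2}$: $T(1/2,0)=(\alpha/2,\omega/2)$, $T(0,1/2)=(0,\beta/2)$, and $T(1/2,1/2)=(\alpha/2,(\omega+\beta)/2)$. Using the standing assumption $\alpha,\beta\notin 2\mathbb{Z}$ (both odd), the first coordinates $\alpha/2$ and the second coordinate $\beta/2$ are half-integers, while the remaining coordinates lie in $\frac{1}{2}\mathbb{Z}$. A quick case-by-case check then shows each image lies in some $c_{k}+\mathbb{Z}^{2}$ with $k\in\{1,2,3\}$, hence each point of $\mathring{\mathcal{F}}_{2}^{2}$ lies in $\Theta_{1}\cup\Theta_{2}\cup\Theta_{3}=\mathcal{Z}(m_{\tilde{D}})$.

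There is no substantive obstacle in the argument; the work is organizational rather than computational. The only thing one must be careful about is verifying the parametrization identity $\Theta_{i}=T^{-1}(c_{i}+\mathbb{Z}^{2})$ accurately from the explicit formulas displayed before \eqref{3.3}, which requires routine algebra (in particular, eliminating $k_{i}$ and $k_{i}'$ from the defining equations to confirm $\alpha x_{1}$ and $\omega x_{1}+2^{\eta}\beta x_{2}$ fall into the stated coset). Once that identity is in hand, parts (i)--(iii) reduce to elementary statements about the group $\frac{1}{2}\mathbb{Z}^{2}/\mathbb{Z}^{2}$.
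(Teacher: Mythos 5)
Your proof is correct and follows essentially the same route as the paper's: the identification $\Theta_i=T^{-1}(c_i+\mathbb{Z}^2)$ checks out against the displayed parametrizations, and your Klein four-group bookkeeping is exactly the content of the paper's direct verifications that $\Theta_i-\Theta_i\subset\Theta_0$, $\pm(\theta_1-\theta_2)\in\Theta_3$, etc., while part (iii) uses the same parity argument on $\alpha,\beta$ (with the two cases according to the parity of $\omega$). Your packaging via the map $T$ and the cosets of $\mathbb{Z}^2$ in $\tfrac12\mathbb{Z}^2$ is a cleaner way to organize the same computation, but it is not a different argument.
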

\begin{proof}
(i) Since $\alpha,\beta\in2\mathbb{Z}+1$, from the definitions of $\mathcal{Z}(m_{\tilde{D}})$ and $\Theta_{0}$, it can easily be seen that $\Theta_{i}-\Theta_{i}\subset \Theta_0$ for any $i\in\{0,1,2,3\}$ and $\Theta_{i}\cap\Theta_0=\emptyset$ for any $i\in\{1,2,3\}$. This yields  $(\Theta_{i}-\Theta_{i})\cap\mathcal{Z}(m_{\tilde{D}})=\emptyset$ for all $i$,  which proves (i).

(ii)  For any $\theta_i\in\Theta_{i}$, it is easy to verify that
$$\pm(\theta_i-\theta_0)\in\Theta_{i}~(i\in\{1,2,3\}),\ \ \pm(\theta_1-\theta_2)\in\Theta_{3},\ \ \pm(\theta_1-\theta_3)\in\Theta_{2}\ \ {\rm and}\ \ \pm(\theta_2-\theta_3)\in\Theta_{1}. $$
Hence the assertion follows by using \eqref{3.3}.

(iii) As $\eta=0$ and $\alpha,\beta\in2\mathbb{Z}+1$, it follows from \eqref{3.2} and \eqref{3.3}  that
$$\bigl(\frac{1}{2},0\bigr)^t\in\Theta_{1},\quad \bigl(0,
\frac{1}{2}\bigr)^t\in\Theta_{2}\quad {\rm and} \quad
\bigl(\frac{1}{2},\frac{1}{2}\bigr)^t\in\Theta_{3} $$
if $\omega\in2\mathbb{Z}$, moreover,
$$\bigl(\frac{1}{2},0\bigr)^t\in\Theta_{3},\quad \bigl(0,
\frac{1}{2}\bigr)^t\in\Theta_{2}\quad {\rm and} \quad
\bigl(\frac{1}{2},\frac{1}{2}\bigr)^t\in\Theta_{1} $$
if $\omega\in2\mathbb{Z}+1$. Therefore, $\mathring{\mathcal{F}}_2^2\subset\Theta_1\cup\Theta_2\cup\Theta_3=\mathcal{Z}(m_{\tilde{D}})$.
\end{proof}

\begin{re}\label{re(3.2)}
{\rm Observe that $\alpha,\beta\in2\mathbb{Z}+1$ in $\tilde{D}$, without loss of generality, we can further assume that $\alpha,\beta\geq1$. In fact, if $\alpha<0$ or $\beta<0$, we take
\begin{equation*}
Q=\begin{cases}
\rm{diag}\bigl(-1,1\bigr),  & \mbox{if }\alpha<0,\beta>0; \\
\rm{diag}\bigl(1,-1\bigr),  & \mbox{if }\alpha>0,\beta<0; \\
\rm{diag}\bigl(-1,-1\bigr),  & \mbox{if }\alpha,\beta<0.
\end{cases}
\end{equation*}
Let $\bar{M}=Q\tilde{M}Q^{-1}$
and
$\bar{D}=Q\tilde{D}$. By Lemma \ref{th(2.2)}, we only need to consider the  spectrality of $\mu_{\bar{M},\bar{D}}$. This implies that the assumption is reasonable. }
\end{re}

To investigate the  spectrality of $\mu_{\tilde{M},\tilde{D}}$, we need to  construct a complete residue system of  matrix $A$.
In view of \eqref{3.1} and \eqref{3.3}, one may easily get that
\begin{equation}\label{3.4}
\mathcal{Z}(\hat{\mu}_{A,\tilde{M},\tilde{D}})=\bigcup_{j=0}^{\infty}A^*\tilde{M}^{*j}(\mathcal{Z}(m_{\tilde{D}}))
=\bigcup_{j=0}^{\infty}\tilde{M}^{*j}(2^{\eta+1}\alpha\beta(\Theta_{1}\cup\Theta_{2}\cup\Theta_{3}))\subset\mathbb{Z}^2.
\end{equation}
Throughout this paper, we set $\hbar_p=\{0,1,\ldots,p-1\}$ for an integer $p\geq1$, and let
\begin{equation}\label{3.5}
\mathcal{S}_q=\left\{\begin{pmatrix}
 s_1\\
 s_2\end{pmatrix}:s_1\in \hbar_{2^{q}\beta},s_2\in \hbar_\alpha\right\}\quad {\rm and}\quad \mathcal{T}_q=\bigcup_{i=0}^{3}\mathcal{T}_{q,i},
\end{equation}
where  $q$ is a nonnegative integer and
\begin{gather*}
\mathcal{T}_{q,0}=\left\{\frac{1}{2^{q+1}\alpha\beta} \begin{pmatrix}
2^{q+1}k_0\beta \\
2k_0^{\prime}\alpha-2k_0\omega
\end{pmatrix}
  :k_0\in \hbar_\alpha,k_0^{\prime}\in\hbar_{2^{q}\beta}
  \right\},\\
  \mathcal{T}_{q,1}=\left\{\frac{1}{2^{q+1}\alpha\beta}
\begin{pmatrix}
2^q(2k_1\beta+\beta) \\
2k_1^{\prime}\alpha-2k_1\omega-\omega
\end{pmatrix}
  :k_1\in \hbar_\alpha,k_1^{\prime}\in\hbar_{2^{q}\beta}
  \right\},\\
\mathcal{T}_{q,2}=\left\{\frac{1}{2^{q+1}\alpha\beta} \begin{pmatrix}
2^{q+1}k_2\beta \\
2k_2^{\prime}\alpha-2k_2\omega+\alpha
\end{pmatrix}
  :k_2\in \hbar_\alpha,k_2^{\prime}\in\hbar_{2^{q}\beta}
  \right\},\\
\mathcal{T}_{q,3}=\left\{
\frac{1}{2^{q+1}\alpha\beta} \begin{pmatrix}
2^q(2k_3\beta+\beta) \\
2k_3^{\prime}\alpha-2k_3\omega+\alpha-\omega
\end{pmatrix}
  :k_3\in \hbar_\alpha,k_3^{\prime}\in\hbar_{2^{q}\beta}
  \right\}.
 \end{gather*}

\begin{pro}\label{th(3.3)}
With the above notations,  the following statements hold.
\begin{enumerate}[(i)]
 	\item  $\mathcal{T}_{\eta,i}\subset\Theta_{i}$ for any $i\in\{0,1,2,3\}$;
 \item  $(\delta_{A^{-1}\tilde{D}},\mathcal{C})$ is a spectral pair, where $A=\rm{diag}\bigl(2^{\eta+1}\alpha\beta,2^{\eta+1}\alpha\beta\bigr)$ and
  $\mathcal{C}=2^{\eta+1}\alpha\beta \{\ell_0,\ell_1,\ell_2,\ell_3\}$ for any  $\ell_i\in\mathcal{T}_{\eta,i}$;
 	\item  $\mathcal{S}_\eta\oplus 2^{\eta+1}\alpha\beta\mathcal{T}_\eta$ is a complete residue system of  matrix $A$ in (ii).
 \end{enumerate}
\end{pro}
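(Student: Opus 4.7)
The plan is to dispatch (i) and (ii) directly from Proposition \ref{th(3.1)} and Lemma \ref{th(2.3)}, and then prove (iii) by combining a cardinality count with a parity-based case analysis.

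For (i), the inclusion $\mathcal T_{\eta,i}\subset\Theta_i$ is immediate on inspection: $\mathcal T_{\eta,i}$ is obtained from $\Theta_i$ by restricting the integer parameters to the fundamental ranges $\hbar_\alpha$ and $\hbar_{2^\eta\beta}$ (that is, taking $q=\eta$ in the defining formulas).

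For (ii), since $A=2^{\eta+1}\alpha\beta\,I$, we have $A^{*-1}=\tfrac{1}{2^{\eta+1}\alpha\beta}I$, so for any distinct $c,c'\in\mathcal{C}$ one obtains $A^{*-1}(c-c')=\ell_i-\ell_j$ for some $i\neq j$. By part (i) this lies in $\Theta_i-\Theta_j$, and Proposition \ref{th(3.1)}(ii) places it inside $\mathcal Z(m_{\tilde D})$. Hence $m_{\tilde D}(A^{*-1}(c-c'))=0$ for all distinct pairs, and since $\#\tilde D=\#\mathcal C=4$, Lemma \ref{th(2.3)} converts this into the spectral-pair statement $(\delta_{A^{-1}\tilde D},\mathcal C)$.

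For (iii), set $T=2^{\eta+1}\alpha\beta$. A direct count gives $\#\mathcal S_\eta\cdot\#\mathcal T_\eta=(2^\eta\alpha\beta)(2^{\eta+2}\alpha\beta)=T^2=|\det A|$, so it suffices to prove that the map $(s,t)\mapsto s+Tt\pmod{T\mathbb Z^2}$ is injective on $\mathcal S_\eta\times\mathcal T_\eta$. Suppose $(s-s')+T(t-t')\in T\mathbb Z^2$. Using $\alpha,\beta\in 2\mathbb Z+1$, I would first observe that the first coordinate of $Tt$ is an even multiple of $2^\eta\beta$ when $t\in\mathcal T_{\eta,0}\cup\mathcal T_{\eta,2}$ and an odd multiple when $t\in\mathcal T_{\eta,1}\cup\mathcal T_{\eta,3}$. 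Combined with the bound $|s_1-s'_1|<2^\eta\beta$, the mixed-parity cases $(i,j)\in\{(0,1),(0,3),(1,2),(2,3)\}$ force an odd integer to equal zero, which is impossible. In the same-parity pairs $(0,2)$ and $(1,3)$, the first-coordinate relation forces the $k$-indices to coincide; the residual second-coordinate expression then reduces to an odd multiple of $\alpha$, contradicting $|s_2-s'_2|<\alpha$. Finally, in the diagonal case $t,t'\in\mathcal T_{\eta,i}$, the same coordinatewise reduction forces $k_i=\tilde k_i$ in $\hbar_\alpha$ and $k_i'=\tilde k_i'$ in $\hbar_{2^\eta\beta}$, whence $t=t'$ and then $s=s'$.

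The main obstacle is the bookkeeping in (iii). Once the odd/even dichotomy of the leading factor in the first coordinate of $T\mathcal T_{\eta,i}$ is identified and combined with the oddness of $\alpha$ in the second coordinate, each case reduces to a short arithmetic check using only the ranges of $k_i,k_i'$ and the bounds on $\mathcal S_\eta$.
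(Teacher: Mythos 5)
Your parts (i) and (ii) coincide with the paper's argument: (i) is read off from the definitions, and (ii) reduces via Lemma \ref{th(2.3)} to checking $m_{\tilde D}(A^{*-1}(c-c'))=0$, which follows from $A^{*-1}(c-c')\in\Theta_i-\Theta_j\subset\mathcal Z(m_{\tilde D})$ by Proposition \ref{th(3.1)}(ii) together with part (i). For (iii) you take a genuinely different, dual route. The paper proves \emph{surjectivity}: it first observes that $2^{\eta+1}\alpha\beta\,\mathcal T_\eta$ is exactly the image of the box $\hbar_{2\alpha}\times\hbar_{2^{\eta+1}\beta}$ under the triangular matrix $\bigl(\begin{smallmatrix}2^\eta\beta&0\\-\omega&\alpha\end{smallmatrix}\bigr)$ (the four classes $\mathcal T_{\eta,i}$ corresponding to the four parity classes of $(k,k')$), and then writes an arbitrary $(x,y)^t\in\mathbb Z^2$ in the required form by two successive one-dimensional complete-residue-system decompositions of $2^{\eta+1}\alpha\beta$, handling the first coordinate and then the $\omega$-shifted second coordinate. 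You instead prove \emph{injectivity} modulo $A\mathbb Z^2$ by a parity case analysis on the leading multiple of $2^\eta\beta$ in the first coordinate and on the residue of the second coordinate modulo $\alpha$, and combine it with the count $\#\mathcal S_\eta\cdot\#\mathcal T_\eta=(2^{\eta+1}\alpha\beta)^2=|\det A|$. Both are valid (in either direction the cardinality identity upgrades a one-sided statement to a bijection; note your count tacitly uses that the four sets $\mathcal T_{\eta,i}$ are pairwise disjoint, which follows from Proposition \ref{th(3.1)} since $0\notin\mathcal Z(m_{\tilde D})$). The paper's change-of-variables argument is shorter and avoids the case split; your version is more hands-on but makes the uniqueness of representatives, which the paper leaves implicit, completely explicit. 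One small imprecision: in your mixed-parity and same-parity cases the contradiction is not literally ``an odd integer equals zero'' or a violation of $|s_2-s_2'|<\alpha$, but rather that an odd integer would have to be divisible by the even modulus $2\alpha$ (resp. $2^{\eta+1}\beta$) after first using the bounds on $\mathcal S_\eta$ to kill $s-s'$; the logic goes through once stated this way.
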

\begin{proof}
According to the definitions of $\mathcal{T}_{\eta,i}$ and $\Theta_{i}$, (i) is obvious.  We now prove (ii).  In view of Lemma \ref{th(2.3)}, it suffices to prove that $m_{\tilde{D}}(A^{*-1}(c-c^\prime))=0$ for  all distinct $c,c^\prime\in \mathcal{C}$. Since $A=\rm{diag}\bigl(2^{\eta+1}\alpha\beta,2^{\eta+1}\alpha\beta\bigr)$, it follows from  Proposition \ref{th(3.1)}(ii) and Proposition \ref{th(3.3)}(i)  that  $A^{*-1}(c-c^\prime)\in\mathcal{Z}(m_{\tilde{D}})$. This implies  $m_{\tilde{D}}(A^{*-1}(c-c^\prime))=0$, the assertion (ii) follows.

Finally ,we  prove (iii). It is clear that the set $\mathcal{S}_\eta\oplus 2^{\eta+1}\alpha\beta\mathcal{T}_\eta$ can be written as
\begin{align}\label{3.6} \nonumber
\mathcal{S}_\eta\oplus 2^{\eta+1}\alpha\beta\mathcal{T}_\eta
&=
\left\{\begin{pmatrix}
 s_1\\
 s_2\end{pmatrix}:s_1\in \hbar_{2^{\eta}\beta},s_2\in \hbar_\alpha\right\}\oplus
 \begin{pmatrix}
2^{\eta}\beta&0 \\
-\omega&\alpha
\end{pmatrix}\left\{\begin{pmatrix}
 k\\
 k^\prime\end{pmatrix}:
 k\in \hbar_{2\alpha},k^\prime\in \hbar_{2^{\eta+1}\beta}
 \right\}\\
 &:=\mathcal{S}_\eta\oplus\begin{pmatrix}
2^{\eta}\beta&0 \\
-\omega&\alpha
\end{pmatrix}\mathcal{Q}.
\end{align}
To prove that $\mathcal{S}_\eta\oplus 2^{\eta+1}\alpha\beta\mathcal{T}_\eta$ is a complete residue system of  $A={\rm diag}\bigl(2^{\eta+1}\alpha\beta,2^{\eta+1}\alpha\beta\bigr)$, by using \eqref{3.6}, it is sufficient to show that for any $(x,y)^t\in \mathbb{Z}^2$, there exist $(s_1,s_2)^t\in\mathcal{S}_\eta$, $(k,k')^t\in\mathcal{Q}$ and $(x',y^\prime)^t\in \mathbb{Z}^2$ such that
\begin{equation}\label{3.7}
\begin{pmatrix}
 x\\
 y\end{pmatrix}=\begin{pmatrix}
 s_1\\
 s_2\end{pmatrix}+\begin{pmatrix}
2^{\eta}\beta&0 \\
-\omega&\alpha
\end{pmatrix}\begin{pmatrix}
 k\\
 k'\end{pmatrix}+2^{\eta+1}\alpha\beta\begin{pmatrix}
 x'\\
 y'\end{pmatrix}.
\end{equation}
Since $\{0,1,\ldots,2^{\eta}\beta-1\}\oplus2^{\eta}\beta\{0,1,\ldots,2\alpha-1\}$  is a complete residue system of $2^{\eta+1}\alpha\beta$, it follows that there exist $s_1\in\{0,1,\ldots,2^{\eta}\beta-1\}$, $k\in\{0,1,\ldots,2\alpha-1\}$ and $x'\in\mathbb{Z}$ such that
\begin{equation}\label{3.8}
x=s_1+2^{\eta}\beta k+2^{\eta+1}\alpha\beta x'.
\end{equation}
Also note that $\{0,1,\ldots,\alpha-1\}\oplus\alpha\{0,1,\ldots,2^{\eta+1}\beta-1\}$ is another complete residue system of $2^{\eta+1}\alpha\beta$, thus there exist
$s_2\in\{0,1,\ldots,\alpha-1\}$, $k'\in\{0,1,\ldots,2^{\eta+1}\beta-1\}$ and $y'\in\mathbb{Z}$ such that
\begin{equation}\label{3.9}
y+\omega k=s_2+\alpha k'+2^{\eta+1}\alpha\beta y'.
\end{equation}
The above equations \eqref{3.8} and \eqref{3.9} imply that  \eqref{3.7} holds, which concludes the proof.
\end{proof}

Let $\Lambda$ be a spectrum of $\mu_{A,\tilde{M},\tilde{D}}$ with $0\in\Lambda$. By \eqref{2.3} and \eqref{3.4}, we have $\Lambda\subset\mathbb{Z}^2$. This together with Proposition \ref{th(3.3)}(iii) implies that for any $\lambda\in\Lambda$,  there exist some  $s\in\mathcal{S}_\eta$ and $\ell\in\mathcal{T}_\eta$ such that $\lambda=s+2^{\eta+1}\alpha\beta\ell+2^{\eta+1}\alpha\beta\gamma$ for some $\gamma\in\mathbb{Z}^2$. Then for $s\in\mathcal{S}_\eta$ and $\ell\in\mathcal{T}_\eta$, define
\begin{equation}\label{3.10}
\Lambda_{s,\ell}=\left\{\gamma\in\mathbb{Z}^2:s+2^{\eta+1}\alpha\beta\ell+2^{\eta+1}\alpha\beta\gamma\in\Lambda\right\}.
\end{equation}
Then using \eqref{3.5}, we have the following decomposition
\begin{equation}\label{3.11}
\Lambda=\bigcup_{s\in\mathcal{S}_\eta}\bigcup_{i\in\{0,1,2,3\}}
\bigcup_{\ell\in\mathcal{T}_{\eta,i}}(s+2^{\eta+1}\alpha\beta\ell+2^{\eta+1}\alpha\beta\Lambda_{s,\ell}),
\end{equation}
where $s+2^{\eta+1}\alpha\beta\ell+2^{\eta+1}\alpha\beta\Lambda_{s,\ell}=\emptyset$ if $\Lambda_{s,\ell}=\emptyset$.  As $0\in\Lambda$, it follows that
\begin{equation}\label{3.12}
\Lambda_{0,0}\neq\emptyset.
\end{equation}

\begin{lemma}\label{th(3.4)}
Let $\Lambda$ be a spectrum of $\mu_{A,\tilde{M},\tilde{D}}$ with $0\in\Lambda$. If $\Lambda_{s,\ell}$ is a nonempty set,  then $\Lambda_{s,\ell}$ is an orthogonal set of $\mu_{\tilde{M},\tilde{D}}$ for each $s\in\mathcal{S}_\eta$ and $\ell\in\mathcal{T}_\eta$.
\end{lemma}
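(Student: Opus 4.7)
The proof plan is to exploit the factorization of $\hat{\mu}_{A,\tilde{M},\tilde{D}}$ given in \eqref{3.1} together with the $\mathbb{Z}^2$-periodicity of the mask polynomial $m_{\tilde{D}}$, which holds because $\tilde{D}\subset\mathbb{Z}^2$. The key observation is that evaluating the factor $m_{\tilde{D}}$ at a lattice point yields $1$, so any zero of $\hat{\mu}_{A,\tilde{M},\tilde{D}}$ at a point of the form $2^{\eta+1}\alpha\beta\gamma$ with $\gamma\in\mathbb{Z}^2$ must be inherited by $\hat{\mu}_{\tilde{M},\tilde{D}}$ at $\gamma$.

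Concretely, I would fix $s\in\mathcal{S}_\eta$ and $\ell\in\mathcal{T}_\eta$ with $\Lambda_{s,\ell}\neq\emptyset$, pick two distinct $\gamma_1,\gamma_2\in\Lambda_{s,\ell}$, and set $\lambda_i:=s+2^{\eta+1}\alpha\beta\ell+2^{\eta+1}\alpha\beta\gamma_i$ for $i=1,2$. By \eqref{3.10} these belong to $\Lambda$, and they are distinct since $\gamma\mapsto s+2^{\eta+1}\alpha\beta\ell+2^{\eta+1}\alpha\beta\gamma$ is obviously injective. Because $\Lambda$ is a spectrum of $\mu_{A,\tilde{M},\tilde{D}}$, the orthogonality condition \eqref{2.3} yields
$$\lambda_1-\lambda_2=2^{\eta+1}\alpha\beta(\gamma_1-\gamma_2)\in\mathcal{Z}(\hat{\mu}_{A,\tilde{M},\tilde{D}}).$$

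Next, I would substitute $\xi=2^{\eta+1}\alpha\beta(\gamma_1-\gamma_2)$ into the factorization from \eqref{3.1} to get
$$m_{\tilde{D}}(\gamma_1-\gamma_2)\,\hat{\mu}_{\tilde{M},\tilde{D}}(\gamma_1-\gamma_2)=\hat{\mu}_{A,\tilde{M},\tilde{D}}\bigl(2^{\eta+1}\alpha\beta(\gamma_1-\gamma_2)\bigr)=0.$$
Since $\gamma_1-\gamma_2\in\mathbb{Z}^2$ and $m_{\tilde{D}}$ is $\mathbb{Z}^2$-periodic, we have $m_{\tilde{D}}(\gamma_1-\gamma_2)=m_{\tilde{D}}(0)=1$, so necessarily $\hat{\mu}_{\tilde{M},\tilde{D}}(\gamma_1-\gamma_2)=0$, i.e., $\gamma_1-\gamma_2\in\mathcal{Z}(\hat{\mu}_{\tilde{M},\tilde{D}})$. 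Applying \eqref{2.3} in the reverse direction identifies this with the orthogonality of $E_{\Lambda_{s,\ell}}$ in $L^2(\mu_{\tilde{M},\tilde{D}})$, which is the desired conclusion.

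I do not anticipate any real obstacle: the argument is essentially bookkeeping plus the periodicity of $m_{\tilde{D}}$. The only subtle point, worth recording explicitly in the write-up, is the injectivity observation that ensures two distinct $\gamma_i$'s produce two distinct $\lambda_i\in\Lambda$, so that the spectral orthogonality of $\Lambda$ can be applied.
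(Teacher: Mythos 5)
Your proposal is correct and follows essentially the same route as the paper: lift distinct points of $\Lambda_{s,\ell}$ to distinct points of $\Lambda$, apply the orthogonality condition \eqref{2.3} for $\mu_{A,\tilde{M},\tilde{D}}$, and then use the factorization \eqref{3.1} together with the $\mathbb{Z}^2$-periodicity of $m_{\tilde{D}}$ (so $m_{\tilde{D}}(\gamma_1-\gamma_2)=1$) to transfer the zero to $\hat{\mu}_{\tilde{M},\tilde{D}}$. No gaps.
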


\begin{proof}
Suppose that $\Lambda_{s,\ell}$ is a nonempty set for $s\in\mathcal{S}_\eta$ and $\ell\in\mathcal{T}_\eta$, then for any distinct $\lambda_1,\lambda_2\in \Lambda_{s,\ell}$, it follows from \eqref{3.11} that $$s+2^{\eta+1}\alpha\beta\ell+2^{\eta+1}\alpha\beta\lambda_1,s+2^{\eta+1}\alpha\beta\ell+2^{\eta+1}\alpha\beta\lambda_2\in \Lambda.$$
Applying \eqref{2.3}, we have $2^{\eta+1}\alpha\beta(\lambda_1-\lambda_2)\in \mathcal{Z}(\hat{\mu}_{A,\tilde{M},\tilde{D}})$. Together with \eqref{3.1}, $\lambda_1,\lambda_2 \in\mathbb{Z}^2$ and $m_{\tilde{D}}(\lambda_1-\lambda_2)=1$, it gives
$$0=\hat{\mu}_{A,\tilde{M},\tilde{D}}(2^{\eta+1}\alpha\beta(\lambda_1-\lambda_2))=
m_{\tilde{D}}(\lambda_1-\lambda_2)
\hat{\mu}_{\tilde{M},\tilde{D}}(\lambda_1-\lambda_2)=\hat{\mu}_{\tilde{M},\tilde{D}}(\lambda_1-\lambda_2).$$
Thus $\lambda_1-\lambda_2\in \mathcal{Z}(\hat{\mu}_{\tilde{M},\tilde{D}})$, which means that $\Lambda_{s,\ell}$ is  an orthogonal set of $\mu_{\tilde{M},\tilde{D}}$.
\end{proof}

The following lemma gives the structure of the spectrum of $\mu_{\tilde{M},\tilde{D}}$  under the assumption  that $\mu_{A,\tilde{M},\tilde{D}}$  is a spectral measure.

\begin{lemma}\label{th(3.5)}
Let $\Lambda$ be a spectrum of $\mu_{A,\tilde{M},\tilde{D}}$ with $0\in\Lambda$. For any
$s\in\mathcal{S}_\eta$, choose a $i_s\in\{0,1,2,3\}$ and
write
\begin{equation*}
\Gamma=\bigcup_{s\in\mathcal{S}_\eta}\bigcup_{\ell\in\mathcal{T}_{\eta,i_s}}
\left(\frac{s+2^{\eta+1}\alpha\beta\ell}{2^{\eta+1}\alpha\beta}+\Lambda_{s,\ell}\right),
\end{equation*}
where $\Lambda_{s,\ell}$ is defined by \eqref{3.10}.
Then $\Gamma$ is a spectrum of $\mu_{\tilde{M},\tilde{D}}$  or an empty set.
\end{lemma}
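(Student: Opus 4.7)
The plan is to verify the Jorgensen--Pedersen criterion for $\Gamma$: assuming $\Gamma\neq\emptyset$, I would show $Q_{\mu_{\tilde M,\tilde D},\Gamma}(\xi)\equiv 1$ so that Theorem \ref{th(JP)}(ii) concludes. Write $\xi_{s,\ell}:=(s+2^{\eta+1}\alpha\beta\ell)/(2^{\eta+1}\alpha\beta)$. Starting from $Q_{\mu_{A,\tilde M,\tilde D},\Lambda}(2^{\eta+1}\alpha\beta\xi)\equiv 1$ (spectrality of $\Lambda$), I would plug in the decomposition \eqref{3.11}, apply the factorization $|\hat\mu_{A,\tilde M,\tilde D}(2^{\eta+1}\alpha\beta\zeta)|^{2}=|m_{\tilde D}(\zeta)|^{2}|\hat\mu_{\tilde M,\tilde D}(\zeta)|^{2}$ from \eqref{3.1}, and use the $\mathbb{Z}^{2}$-periodicity of $m_{\tilde D}$ (valid since $\Lambda_{s,\ell}\subset\mathbb{Z}^{2}$) to reach the key identity
\begin{equation*}
\sum_{s\in\mathcal{S}_\eta}\sum_{i=0}^{3}\sum_{\ell\in\mathcal{T}_{\eta,i}}|m_{\tilde D}(\xi+\xi_{s,\ell})|^{2}\,Q_{\mu_{\tilde M,\tilde D},\Lambda_{s,\ell}}(\xi+\xi_{s,\ell})=1.
\end{equation*}

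Next I would exploit the translation structure $\mathcal{T}_{\eta,i}=\mathcal{T}_{\eta,0}+e_{i}$ (with $e_{0}=0$ and $e_{1},e_{2},e_{3}$ read off from \eqref{3.5}); applying Proposition \ref{th(3.3)}(ii) to $\{e_{0},e_{1},e_{2},e_{3}\}$ yields the probability normalization $\sum_{i=0}^{3}|m_{\tilde D}(y+e_{i})|^{2}=1$ for every $y\in\mathbb{R}^{2}$. Setting $I=(s,\tilde\ell)\in\mathcal{S}_\eta\times\mathcal{T}_{\eta,0}$, $J=i\in\{0,1,2,3\}$, $p_{I,J}=|m_{\tilde D}(\xi+\xi_{s,\tilde\ell+e_{i}})|^{2}$ and $q_{I,J}=Q_{\mu_{\tilde M,\tilde D},\Lambda_{s,\tilde\ell+e_{i}}}(\xi+\xi_{s,\tilde\ell+e_{i}})$, the identity above becomes $\sum_{I,J}p_{I,J}q_{I,J}=1$ with $\sum_{J}p_{I,J}=1$, while Lemma \ref{th(3.4)} combined with Theorem \ref{th(JP)}(i) gives $q_{I,J}\le 1$. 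This is precisely the setup of Lemma \ref{th(DL)}.

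The main obstacle is verifying the remaining hypothesis $\sum_{I}\max_{J}q_{I,J}\le 1$, which is equivalent to showing that every generalized set $\Gamma_\tau$ obtained by allowing the type choice $\tau(s,\tilde\ell)$ to depend on both $s$ and $\tilde\ell$ is still orthogonal in $L^{2}(\mu_{\tilde M,\tilde D})$. For two distinct points of such $\Gamma_\tau$, orthogonality of $\Lambda$ in $L^{2}(\mu_{A,\tilde M,\tilde D})$ forces $2^{\eta+1}\alpha\beta(\gamma-\gamma')\in\mathcal{Z}(\hat\mu_{A,\tilde M,\tilde D})$, so by \eqref{3.4} the difference $\gamma-\gamma'$ lies in $\mathcal{Z}(m_{\tilde D})\cup\mathcal{Z}(\hat\mu_{\tilde M,\tilde D})$. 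I would rule out the borderline case $\gamma-\gamma'\in\mathcal{Z}(m_{\tilde D})\setminus\mathcal{Z}(\hat\mu_{\tilde M,\tilde D})$ by a case analysis: when $s=s'$, $\mathbb{Z}^{2}$-periodicity reduces the question to whether $\ell-\ell'$ can lie in $\mathcal{Z}(m_{\tilde D})$, which Proposition \ref{th(3.1)} forbids within $\Theta_{i}-\Theta_{i}\subset\Theta_{0}$; when $s\neq s'$, the tight confinement of $s-s'$ to coordinates in $(-2^{\eta}\beta,2^{\eta}\beta)\times(-\alpha,\alpha)$ combined with the explicit description of $\Theta_{0}$ after \eqref{3.3} again forces $m_{\tilde D}(\gamma-\gamma')\neq 0$, and the factorization in \eqref{3.1} at $2^{\eta+1}\alpha\beta(\gamma-\gamma')$ then delivers $\hat\mu_{\tilde M,\tilde D}(\gamma-\gamma')=0$.

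With the hypothesis in hand, Lemma \ref{th(DL)} gives that $q_{I,J}$ is independent of $J$ for each $I$ and $\sum_{I}q_{I,0}=1$. Thus, for any choice of $i_{s}$ the values $Q_{\mu_{\tilde M,\tilde D},\Lambda_{s,\tilde\ell+e_{i_{s}}}}(\xi+\xi_{s,\tilde\ell+e_{i_{s}}})$ all equal a common value $q_{(s,\tilde\ell)}(\xi)$, and summing over $(s,\tilde\ell)$ produces $Q_{\mu_{\tilde M,\tilde D},\Gamma}(\xi)=\sum_{(s,\tilde\ell)}q_{(s,\tilde\ell)}(\xi)=1$; Theorem \ref{th(JP)}(ii) then says $\Gamma$ is a spectrum of $\mu_{\tilde M,\tilde D}$. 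If, on the other hand, every $\Lambda_{s,\ell}$ with $\ell\in\mathcal{T}_{\eta,i_{s}}$ is empty, then $\Gamma=\emptyset$ by construction.
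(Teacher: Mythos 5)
Your overall strategy is the paper's: expand $Q_{\mu_{A,\tilde M,\tilde D},\Lambda}\equiv1$ through the decomposition \eqref{3.11} and the factorization \eqref{3.1}, and feed the result into Lemma \ref{th(DL)}. The gap is in how you index that lemma. You take $I=(s,\tilde\ell)$ with $\tilde\ell\in\mathcal{T}_{\eta,0}$ and $J=i$, so the hypothesis $\sum_I\max_Jq_{I,J}\le1$ forces you to prove that \emph{every} set $\Gamma_\tau$ with a type choice $\tau(s,\tilde\ell)$ depending on $\tilde\ell$ is an orthogonal set of $\mu_{\tilde M,\tilde D}$. Your own case analysis does not deliver this: in the case $s=s'$ with $\tilde\ell\neq\tilde\ell'$ and $\tau(s,\tilde\ell)=i\neq j=\tau(s,\tilde\ell')$, the difference $\ell-\ell'$ lies in $\Theta_i-\Theta_j$, which by Proposition \ref{th(3.1)}(ii) is \emph{contained in} $\mathcal{Z}(m_{\tilde D})$ rather than disjoint from it. Then $m_{\tilde D}(\gamma-\gamma')=0$ and the identity $0=m_{\tilde D}(\gamma-\gamma')\,\hat\mu_{\tilde M,\tilde D}(\gamma-\gamma')$ gives no information about $\hat\mu_{\tilde M,\tilde D}(\gamma-\gamma')$; there is no reason for such $\Gamma_\tau$ to be orthogonal. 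Your appeal to ``$\Theta_i-\Theta_i\subset\Theta_0$'' only covers the sub-case where the two points receive the same type, i.e.\ exactly the situation where the type depends on $s$ alone — which is the hypothesis of the lemma, not of your strengthened claim. (Your $s\neq s'$ analysis, by contrast, is sound and matches the paper's Case 2, since $\Theta_0\cup\mathcal{Z}(m_{\tilde D})$ is a group and the confinement of $s-s'$ excludes it.)

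The paper repairs exactly this point by coarsening the index to $I=s$, $J=i$, with $q_{s,i}$ aggregating over \emph{all} $\ell\in\mathcal{T}_{\eta,i}$ and all $\lambda'\in\Lambda_{s,\ell}$; then $\sum_s\max_iq_{s,i}\le1$ follows from Theorem \ref{th(JP)}(i) applied to the orthogonal set $\Gamma$ with the $s$-only type choice, which is Step 1 of the paper's proof. The price of this coarsening is that one must first pull the factor $|m_{\tilde D}(\xi+\xi_{s,\ell})|^2$ out of the sum over $\ell\in\mathcal{T}_{\eta,i}$, i.e.\ prove it is constant as $\ell$ ranges over a fixed $\mathcal{T}_{\eta,i}$; the paper obtains this (its \eqref{3.17}) from the normalization \eqref{3.16} by fixing three of the four representatives and varying the fourth. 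Your proposal skips \eqref{3.17} precisely because of the finer indexing, so restoring the coarser indexing requires adding that step. A further minor omission: Lemma \ref{th(DL)} needs $p_{I,J}>0$, which the paper secures by first taking $\xi\in\mathbb{R}^2\setminus\mathbb{Q}^2$ and extending \eqref{3.20}--\eqref{3.21} to all $\xi$ by continuity.
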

\begin{proof}
If $\Gamma$ is a nonempty set, we will illustrate our following two steps needed to complete the proof.

{\bf Step 1.} We prove that $\Gamma$ is an orthogonal set of $\mu_{\tilde{M},\tilde{D}}$.

For any distinct $\varsigma_1,\varsigma_2\in \Gamma$, we can write $$\varsigma_k=\frac{s_k+2^{\eta+1}\alpha\beta\ell_k}{2^{\eta+1}\alpha\beta}+\lambda_k,$$ where $s_k\in\mathcal{S}_\eta$,   $\ell_k\in\mathcal{T}_{\eta,i_{s_k}}$, $\lambda_k\in\Lambda_{s_k,\ell_k}$  and $i_{s_k}\in\{0,1,2,3\}$, $k=1,2$. Applying \eqref{3.1}, the fact $\lambda_1,\lambda_2 \in\mathbb{Z}^2$ and the $\mathbb{Z}^2$-periodicity of $m_{\tilde{D}}$, one has
\begin{align}\label{3.13}\nonumber
0&=\hat{\mu}_{A,\tilde{M},\tilde{D}}(2^{\eta+1}\alpha\beta(\varsigma_1-\varsigma_2))
=m_{\tilde{D}}(\varsigma_1-\varsigma_2)\hat{\mu}_{\tilde{M},\tilde{D}}(\varsigma_1-\varsigma_2) \nonumber \\
&=m_{\tilde{D}}(\frac{s_1-s_2}{2^{\eta+1}\alpha\beta}+\ell_1-\ell_2+\lambda_1-\lambda_2)\hat{\mu}_{\tilde{M},\tilde{D}}(\varsigma_1-\varsigma_2) \nonumber\\
&=m_{\tilde{D}}(\frac{s_1-s_2}{2^{\eta+1}\alpha\beta}+\ell_1-\ell_2)\hat{\mu}_{\tilde{M},\tilde{D}}(\varsigma_1-\varsigma_2).
\end{align}
We now claim that $m_{\tilde{D}}(\frac{s_1-s_2}{2^{\eta+1}\alpha\beta}+\ell_1-\ell_2)\neq0$. The proof will be divided into the following two cases.

Case 1: $s_1=s_2$. In this case, it is clear that $\ell_1,\ell_2\in\mathcal{T}_{\eta,i_{s_1}}$ by the definition of $\Gamma$. With Proposition \ref{th(3.1)}(i) and Proposition \ref{th(3.3)}(i), we derive that $\ell_1-\ell_2\notin \mathcal{Z}(m_{\tilde{D}})$. Thus the claim follows.

Case 2:  $s_1\neq s_2$. For this case, we prove the claim by contradiction. Suppose, on the contrary, that
\begin{equation}\label{3.14}
\frac{s_1-s_2}{2^{\eta+1}\alpha\beta}+\ell_1-\ell_2\in \mathcal{Z}(m_{\tilde{D}}).
\end{equation}  By Proposition \ref{th(3.1)} and Proposition \ref{th(3.3)}(i), one has  $\ell_1-\ell_2\in \Theta_{0}\cup\mathcal{Z}(m_{\tilde{D}}).$ Combining this with \eqref{3.14}, we conclude  that
\begin{equation}\label{3.15}
\frac{s_1-s_2}{2^{\eta+1}\alpha\beta}\in \Theta_{0}\cup\mathcal{Z}(m_{\tilde{D}}).
\end{equation}
Using \eqref{3.5} and $s_1\neq s_2$, it is easy to check that $s_1-s_2\in \mathfrak{B}$, where
\begin{equation*}
\mathfrak{B}=\left\{\begin{pmatrix}
t_1 \\
t_2
\end{pmatrix}:t_1\in\left\{1-{2^{\eta}}\beta,\ldots,{2^{\eta}}\beta-1\right\},
t_2\in\left\{1-\alpha,\ldots,\alpha-1\right\}\right\}\setminus
\{\bf0\}.
\end{equation*}
Write $s_1-s_2=(t_1,t_2)^t\in \mathfrak{B}$.
We first prove $t_1=0$. If $t_1\neq0$, it follows  $t_1\notin 2^{\eta}\beta\mathbb{Z}$. Then from the definitions of $\mathcal{Z}(m_{\tilde{D}})$ and $\Theta_{0}$, it can easily be seen that
\begin{equation*}
\frac{s_1-s_2}{2^{\eta+1}\alpha\beta}=\frac{1}{2^{\eta+1}\alpha\beta}\begin{pmatrix}
t_1 \\
t_2
\end{pmatrix}\notin \Theta_{0}\cup\mathcal{Z}(m_{\tilde{D}}).
\end{equation*}
This contradicts to \eqref{3.15}, which proves $t_1=0$.

Since $t_1=0$, it follows from $\beta\in2\mathbb{Z}+1$ that $\frac{s_1-s_2}{2^{\eta+1}\alpha\beta}\notin \Theta_{1}\cup\Theta_{3}$. Together with \eqref{3.15} and $t_1=0$, it yields that
$$\frac{s_1-s_2}{2^{\eta+1}\alpha\beta}=\frac{1}{2^{\eta+1}\alpha\beta}\begin{pmatrix}
0\\
t_2
\end{pmatrix}\in \Theta_{0}\cup\Theta_{2}.$$
By  a simple  calculation, we deduce from $\beta\in2\mathbb{Z}+1$  that $t_2\in \alpha\mathbb{Z}$. However,
$(t_1,t_2)^t=(0,t_2)^t\in \mathfrak{B}$ means that $t_2\notin \alpha\mathbb{Z}$, a contradiction. Hence the claim follows.

Applying  the claim and \eqref{3.13}, we obtain that $\hat{\mu}_{\tilde{M},\tilde{D}}(\varsigma_1-\varsigma_2)=0$. This implies that $\Gamma$ is an orthogonal set of $\mu_{\tilde{M},\tilde{D}}$.

{\bf Step 2.} We prove  the completeness of the exponential function system $E_\Gamma=\{e^{2\pi i\langle\lambda,x\rangle}:\lambda\in\Gamma\}$.

Fix $s\in\mathcal{S}_\eta$, in view of Proposition \ref{th(3.3)}(ii) and Theorem  \ref{th(JP)}, one may get that for any $\ell_{i_s}\in\mathcal{T}_{\eta,i_s}$,
\begin{equation}\label{3.16}
\sum_{{i_s}=0}^3\big|m_{\tilde{D}}(\frac{s+2^{\eta+1}\alpha\beta\ell_{i_s}+\xi}{2^{\eta+1}\alpha\beta})\big|^2\equiv1.
\end{equation}
In \eqref{3.16}, let three of $\ell_0,\ell_1,\ell_2$ and $\ell_3$ be fixed, and the other is altered   in $\mathcal{T}_{\eta,i_s}$, we can easily  verify that for all distinct $\ell,\ell^\prime\in \mathcal{T}_{\eta,i_s}$,
\begin{equation}\label{3.17}
\big|m_{\tilde{D}}(\frac{s+2^{\eta+1}\alpha\beta\ell+\xi}{2^{\eta+1}\alpha\beta})\big|=
\big|m_{\tilde{D}}(\frac{s+2^{\eta+1}\alpha\beta\ell^\prime+\xi}{2^{\eta+1}\alpha\beta})\big|.
\end{equation}
Since $\Lambda_{s,\ell}\subset\mathbb{Z}^2$ and $\Lambda$ is a spectrum of $\mu_{A,\tilde{M},\tilde{D}}$, it follows from the  $\mathbb{Z}^2$-periodicity of $m_{\tilde{D}}(x)$ that
\begin{align}\label{3.18}\nonumber
1&\equiv\sum_{\lambda\in\Lambda}|{\hat{\mu}_{A,\tilde{M},\tilde{D}}(\xi+\lambda)}|^2 \quad   {\rm (by \ Theorem \ \ref{th(JP)})} \nonumber \\
&=\sum_{s\in\mathcal{S}_\eta}\sum_{{i_s}=0}^3\sum_{\ell\in\mathcal{T}_{\eta,i_s}}\sum_{\lambda^\prime\in\Lambda_{s,\ell}}
\big|{\hat{\mu}_{A,\tilde{M},\tilde{D}}(\xi+s+2^{\eta+1}\alpha\beta\ell+2^{\eta+1}\alpha\beta\lambda^\prime)}\big|^2 \quad  {\rm (by \ \eqref{3.11})} \nonumber\\
&=\sum_{s\in\mathcal{S}_\eta}\sum_{{i_s}=0}^3\sum_{\ell\in\mathcal{T}_{\eta,i_s}}\big|m_{\tilde{D}}
(\frac{s+2^{\eta+1}\alpha\beta\ell+\xi}{2^{\eta+1}\alpha\beta})\big|^2
\sum_{\lambda^\prime\in\Lambda_{s,\ell}}\big|{\hat{\mu}_{\tilde{M},\tilde{D}}
(\frac{s+2^{\eta+1}\alpha\beta\ell+\xi}{2^{\eta+1}\alpha\beta}+\lambda^\prime)}\big|^2
\quad {\rm (by \ \eqref{3.1})} \nonumber \\
&=\sum_{s\in\mathcal{S}_\eta}\sum_{{i_s}=0}^3\big|m_{\tilde{D}}(\frac{s+2^{\eta+1}\alpha\beta\ell_{i_s}+\xi}{2^{\eta+1}\alpha\beta})\big|^2
\sum_{\ell\in\mathcal{T}_{\eta,i_s}}\sum_{\lambda^\prime\in\Lambda_{s,\ell}}
\big|{\hat{\mu}_{\tilde{M},\tilde{D}}(\frac{s+2^{\eta+1}\alpha\beta\ell+\xi}
{2^{\eta+1}\alpha\beta}+\lambda^\prime)}\big|^2,
\end{align}
where $\ell_{i_s}\in\mathcal{T}_{\eta,i_s}$ and the last  equality follows from \eqref{3.17}.

We now choose $\xi\in \mathbb{R}^2\setminus\mathbb{Q}^2$, and for simplicity, write
\begin{equation*}
p_{s,{i_s}}=\big|m_{\tilde{D}}(\frac{s+2^{\eta+1}\alpha\beta\ell_{i_s}+\xi}{2^{\eta+1}\alpha\beta})\big|^2 \ \  {\rm and} \ \ q_{s,{i_s}}=\sum_{\ell\in\mathcal{T}_{\eta,i_s}}\sum_{\lambda^\prime\in
\Lambda_{s,\ell}}\big|{\hat{\mu}_{\tilde{M},\tilde{D}}
(\frac{s+2^{\eta+1}\alpha\beta\ell+\xi}{2^{\eta+1}\alpha\beta}+\lambda^\prime)}\big|^2.
\end{equation*}
Then one may derive from \eqref{3.3} that $p_{s,{i_s}}>0$, and \eqref{3.18} becomes
\begin{equation}\label{3.19}
\sum_{s\in\mathcal{S}_\eta}\sum_{{i_s}=0}^3p_{s,{i_s}}q_{s,{i_s}}=1.
\end{equation}
Note that $\Gamma$ is an orthogonal set of $\mu_{\tilde{M},\tilde{D}}$, thus Theorem \ref{th(JP)} implies that
$$
\sum_{s\in\mathcal{S}_\eta}\max\{q_{s,0},q_{s,1},q_{s,2},q_{s,3}\}\leq1.
$$
Together with \eqref{3.16}, \eqref{3.19}  and Lemma \ref{th(DL)}, it concludes that
\begin{equation}\label{3.20}
\sum_{s\in\mathcal{S}_\eta}\sum_{\ell\in\mathcal{T}_{\eta,i_s}}
\sum_{\lambda^\prime\in\Lambda_{s,\ell}}\big|{\hat{\mu}_{\tilde{M},\tilde{D}}
(\frac{s+2^{\eta+1}\alpha\beta\ell+\xi}{2^{\eta+1}\alpha\beta}+\lambda^\prime)}\big|^2=1,\quad {i_s}=0,1,2,3,
\end{equation}
and
\begin{align}\label{3.21}\nonumber
\sum_{\ell\in\mathcal{T}_{\eta,0}}\sum_{\lambda^\prime\in\Lambda_{s,\ell}}\big|{\hat{\mu}_{\tilde{M},\tilde{D}}
(\frac{s+2^{\eta+1}\alpha\beta\ell+\xi}
{2^{\eta+1}\alpha\beta}+\lambda^\prime)}\big|^2&=\sum_{\ell\in\mathcal{T}_{\eta,1}}
\sum_{\lambda^\prime\in\Lambda_{s,\ell}}\big|{\hat{\mu}_{\tilde{M},\tilde{D}}(\frac{s+2^{\eta+1}\alpha\beta\ell+\xi}
{2^{\eta+1}\alpha\beta}+\lambda^\prime)}\big|^2 \nonumber \\
&=\sum_{\ell\in\mathcal{T}_{\eta,2}}\sum_{\lambda^\prime\in\Lambda_{s,\ell}}\big|{\hat{\mu}_{\tilde{M},\tilde{D}}
(\frac{s+2^{\eta+1}\alpha\beta\ell+\xi}
{2^{\eta+1}\alpha\beta}+\lambda^\prime)}\big|^2 \nonumber\\
&=\sum_{\ell\in\mathcal{T}_{\eta,3}}\sum_{\lambda^\prime\in\Lambda_{s,\ell}}\big|{\hat{\mu}_{\tilde{M},\tilde{D}}
(\frac{s+2^{\eta+1}\alpha\beta\ell+\xi}
{2^{\eta+1}\alpha\beta}+\lambda^\prime)}\big|^2
\end{align}
for any $s\in\mathcal{S}_\eta$.

By continuity, we conclude that the above equations \eqref{3.20} and \eqref{3.21} hold for all $\xi\in \mathbb{R}^2$. Therefore,  Theorem \ \ref{th(JP)} shows that $\Gamma$ a spectrum of $\mu_{\tilde{M},\tilde{D}}$ for any group $\{i_s\}_{s\in\mathcal{S}_\eta}$ with $i_s\in\{0,1,2,3\}$.
The  proof  is complete.
\end{proof}

\begin{re}\label{re(3.6)}
{\rm Suppose $\Lambda=\bigcup_{s\in\mathcal{S}_\eta}\bigcup_{i\in\{0,1,2,3\}}
\bigcup_{\ell\in\mathcal{T}_{\eta,i}}(s+2^{\eta+1}\alpha\beta\ell+2^{\eta+1}
\alpha\beta\Lambda_{s,\ell})$ is a spectrum of $\mu_{A,\tilde{M},\tilde{D}}$ with $0\in\Lambda$, then we can conclude from \eqref{3.21} that for any $s\in\mathcal{S}_\eta$, one of the following two statements holds:
\begin{enumerate}[(i)]
 	\item  There exist some $\ell_{i_s}\in\mathcal{T}_{\eta,i_s}$ such that $\Lambda_{s,\ell_{i_s}}\neq\emptyset$ for all $0\leq i_s\leq 3$;
 	\item  $\Lambda_{s,\ell}=\emptyset$ for any $\ell\in\mathcal{T}_\eta=\bigcup_{i=0}^{3}\mathcal{T}_{\eta,i}$.
 \end{enumerate}
 In particular,
the assumption $0\in\Lambda$ implies $\Lambda_{0,0}\neq\emptyset$. Therefore, (i)  always hold for $s=0$, which illustrates that
there must exist
$\ell_{i_0}\in\mathcal{T}_{\eta,i_0}$ such that $\Lambda_{0,\ell_{i_0}}\neq\emptyset$ for all $1\leq i_0\leq 3$.}
\end{re}

In order to prove Theorems \ref{th(1.5)} and \ref{th(1.6)} more conveniently, we define
\begin{gather*}
\Phi_{0}=\left\{\upsilon\in \mathbb{Z}^2:\upsilon=
(0,0)^t
\pmod {2\mathbb{Z}^2}
\right\},\\
\Phi_{1}=\left\{\upsilon\in \mathbb{Z}^2:\upsilon=
(1,0)^t
\pmod {2\mathbb{Z}^2}
\right\},\\
\Phi_{2}=\left\{\upsilon\in \mathbb{Z}^2:\upsilon=
(0,1)^t
\pmod {2\mathbb{Z}^2}
\right\},\\
\Phi_{3}=\left\{\upsilon\in \mathbb{Z}^2:\upsilon=
(1,1)^t
\pmod {2\mathbb{Z}^2}
\right\}.
\end{gather*}
Then
 \begin{equation}\label{3.22}
\mathbb{Z}^2=\bigcup_{i=0}^3\Phi_i.
\end{equation}
We have all ingredients for the proof of Theorem \ref{th(1.5)}.

\begin{proof}[ Proof of  Theorem \ref{th(1.5)}]
We will prove this theorem by the circle ``$(ii)\Rightarrow(iii)\Rightarrow(iv)\Rightarrow(i)\Rightarrow(ii)$".

``$(ii)\Rightarrow(iii)$"
If $\tilde{M}\in M_2(2\mathbb{Z})$, we can write $\tilde{M}=\begin{pmatrix}
2\tilde{a}&2\tilde{b} \\
2\tilde{c}&2\tilde{d}
\end{pmatrix}$ with $\tilde{ a},\tilde{b},\tilde{c},\tilde{d}\in\mathbb{Z}$. Then with \eqref{1.3}, it is easy to verify that
$$\tilde{M}\mathring{\mathcal{F}}_2^2=
\left\{\begin{pmatrix}
\tilde{a}\\
\tilde{c}\end{pmatrix},\begin{pmatrix}
\tilde{b}\\
\tilde{d}\end{pmatrix},\begin{pmatrix}
\tilde{a}+\tilde{b}\\
\tilde{c}+\tilde{d}\end{pmatrix}\right\}\subset\mathbb{Z}^2.$$
Hence the assertion follows.

``$(iii)\Rightarrow(iv)$"
Suppose $\tilde{M}\mathring{\mathcal{F}}_2^2\subset\mathbb{Z}^2$, which implies $\tilde{C}:=\tilde{M}^*\mathcal{F}_2^2\subset\mathbb{Z}^2$. Then using Lemma \ref{th(2.3)} and Proposition \ref{th(3.1)}(iii),
we obtain that  $(\tilde{M},\tilde{D},\tilde{C})$ is a  Hadamard triple. Therefore, $(\tilde{M},\tilde{D})$ is admissible.

``$(iv)\Rightarrow(i)$" If $(\tilde{M},\tilde{D})$ is admissible, then $\mu_{\tilde{M},\tilde{D}}$ is a spectral measure  by Theorem \ref{th(DHL)}.

``$(i)\Rightarrow(ii)$" Suppose that $\mu_{\tilde{M},\tilde{D}}$ is a spectral measure, and let $A={\rm diag}\bigl(2\alpha\beta,2\alpha\beta\bigr)$. In view of Lemma \ref{th(2.4)}, one may derive that $\mu_{A,\tilde{M},\tilde{D}}$ is also a spectral measure. Let $\Lambda$ be a spectrum of $\mu_{A,\tilde{M},\tilde{D}}$ with $0\in\Lambda$.
First, we  construct a spectrum of $\mu_{\tilde{M},\tilde{D}}$.
Recall that $\mathcal{T}_{\eta,i}$ and $\Phi_i$  are defined by \eqref{3.5} and \eqref{3.22}, respectively. By $\eta=0$ and a simple calculation, one has $2\alpha\beta \tilde{M}^{*}\mathcal{T}_{\eta,0}\subset \Phi_{0}$. For $i\in\{1,2,3\}$, we can suppose that $2\alpha\beta \tilde{M}^{*}\mathcal{T}_{\eta,i}\subset \Phi_{j_i}$ for some $j_i\in\{0,1,2,3\}$. Consequently,
\begin{equation*}
\bigcup_{i=1}^3 2\alpha\beta \tilde{M}^{*}\mathcal{T}_{\eta,i}\subset \bigcup_{i=1}^3\Phi_{j_i}.
\end{equation*}
This means that for any $s\in\mathcal{S}_\eta\setminus\{0\}$,
there exists  ${i_s}\in\{0,1,2,3\}$ such that $s+2\alpha\beta\ell_s\notin\bigcup_{j=1}^3 2\alpha\beta \tilde{M}^{*}\mathcal{T}_{\eta,j}+2\mathbb{Z}^2$ for any $\ell_s\in\mathcal{T}_{\eta,i_s}$.
Define
\begin{equation}\label{3.23}
\Gamma=\Delta_{0,0}\cup\bigcup_{s\in\mathcal{S}_\eta\setminus\{{0}\}}
\Delta_{s,i_s},
\end{equation}
where $\Delta_{0,0}=\bigcup_{\ell_0\in\mathcal{T}_{\eta,0}}(\ell_0+\Lambda_{0,\ell_0})$,
 $\Delta_{s,i_s}=
\bigcup_{\ell_s\in\mathcal{T}_{\eta,i_s}}
(\frac{s+2\alpha\beta\ell_s}{2\alpha\beta}+\Lambda_{s,\ell_s})$ with
\begin{equation}\label{3.24}
(s+2\alpha\beta\mathcal{T}_{\eta,i_s})\cap\left(\bigcup_{j=1}^3 2\alpha\beta \tilde{M}^{*}\mathcal{T}_{\eta,j}+2\mathbb{Z}^2\right)=\emptyset,
\end{equation}
 and $\Lambda_{s,\ell_s}$ is defined by \eqref{3.10}.
In view of Lemma \ref{th(3.5)}, we get that $\Gamma$
is a spectrum of $\mu_{\tilde{M},\tilde{D}}$. Moreover, it follows from $0\in\Lambda$ and Lemma \ref{th(2.4)}  that $0\in\Gamma$.

Second, we prove that for any $i\in\{1,2,3\}$, there must exist $\ell_i\in\mathcal{T}_{\eta,i}$ such that $2\alpha\beta \tilde{M}^*\ell_i\in2\mathbb{Z}^2$. Since  $\Gamma$
is a spectrum of $\mu_{\tilde{M},\tilde{D}}$ with $0\in \Gamma$, it follows from Lemma \ref{th(2.4)}  that $2\alpha\beta \tilde{M}^{*-1}\Gamma$ is a spectrum of $\mu_{A,\tilde{M},\tilde{D}}$ with $0\in2\alpha\beta \tilde{M}^{*-1}\Gamma$. Using \eqref{3.11}, one has
\begin{equation}\label{3.25}
2\alpha\beta \tilde{M}^{*-1}\Gamma=\bigcup_{s^\prime\in\mathcal{S}_\eta}\bigcup_{i\in\{0,1,2,3\}}
\bigcup_{\ell_{i}^\prime\in\mathcal{T}_{\eta,i}}(s^\prime+2\alpha\beta\ell_{i}^\prime+
2\alpha\beta\Lambda^\prime_{s^\prime,\ell_{i}^\prime}),
\end{equation}
where
$$
\Lambda^\prime_{s^\prime,\ell_{i}^\prime}=\left\{\gamma\in\mathbb{Z}^2:s^\prime+2\alpha\beta\ell_{i}^\prime+
2\alpha\beta\gamma\in2\alpha\beta \tilde{M}^{*-1}\Gamma\right\}.
$$
For $s^\prime=0$ and $\ell^\prime_{i}=0\in\mathcal{T}_{\eta,0}$, we have $\Lambda^\prime_{0,0}\neq\emptyset$ since $0\in2\alpha\beta \tilde{M}^{*-1}\Gamma$.  By Remark \ref{re(3.6)}, there must exist
$\ell^\prime_{i}\in\mathcal{T}_{\eta,i}$ such that $\Lambda^\prime_{0,\ell_{i}^\prime}\neq\emptyset$ for all $1\leq i\leq 3$. Let $\lambda_{i}^\prime\in\Lambda^\prime_{0,\ell_{i}^\prime}$, where  $i=1,2, 3$. Therefore, \eqref{3.23} and \eqref{3.25} imply that
there exist $s_{i}\in\mathcal{S}_\eta$,
$\ell_{i}\in\bigcup_{j=0}^3\mathcal{T}_{\eta,j}$ and
$\lambda_{i}\in\Lambda_{s_i,\ell_i}$ such that $\frac{s_i+2\alpha\beta\ell_i}{2\alpha\beta}+\lambda_{i}\in \Gamma$ and
\begin{equation}\label{3.26}
2\alpha\beta \tilde{M}^*\ell_{i}^\prime+2\alpha\beta \tilde{M}^*\lambda_{i}^\prime
=s_{i}+2\alpha\beta\ell_{i}+2\alpha\beta\lambda_{i}\quad {\rm for} \ {i}=1,2,3.
\end{equation}
Moreover, it follows from \eqref{3.24} that
$s_i+2\alpha\beta\ell_i\notin\bigcup_{j=1}^3 2\alpha\beta \tilde{M}^{*}\mathcal{T}_{\eta,j}+2\mathbb{Z}^2$ if $s_i\neq 0$ for $i=1,2,3.$  However, by noting that  $\lambda_i,\lambda_i^\prime\in\mathbb{Z}^2$, then \eqref{3.26} yields that
$$
s_i+2\alpha\beta\ell_i\in2\alpha\beta \tilde{M}^*\ell_i^\prime+ 2\mathbb{Z}^2\subset 2\alpha\beta \tilde{M}^{*}\mathcal{T}_{\eta,i}+2\mathbb{Z}^2
\subset\bigcup_{j=1}^3 2\alpha\beta \tilde{M}^{*}\mathcal{T}_{\eta,j}+2\mathbb{Z}^2
$$
for $i=1,2,3.$ Therefore, the above discussion shows that $s_i=0$ for $i=1,2,3$, and hence $\ell_i\in\mathcal{T}_{\eta,0}$ by the definition of $\Gamma$.
This implies that $2\alpha\beta\ell_{i}\in2\mathbb{Z}^2$ for ${i}=1,2,3$.
Combining this with $\tilde{M}\in M_2(\mathbb{Z})$, $s_i=0$ and $\lambda_{i},\lambda_{i}^\prime\in\mathbb{Z}^2$, one may infer from \eqref{3.26} that
\begin{equation*}
2\alpha\beta \tilde{M}^*\ell_{i}^\prime
=2\alpha\beta\ell_{i}+2\alpha\beta(\lambda_{i}
-\tilde{M}^*\lambda_{i}^\prime)\in2\mathbb{Z}^2\quad {\rm for} \ {i}=1,2,3.
\end{equation*}
Therefore, $2\alpha\beta \tilde{M}^*\ell_i^\prime\in2\mathbb{Z}^2$ for some $\ell_i^\prime\in\mathcal{T}_{\eta,i}$, where $i=1,2,3$.

It remains to prove $\tilde{M}\in M_2(2\mathbb{Z})$. For any $i\in\{1,2,3\}$, the above conclusion shows that there must exist $\ell_i\in\mathcal{T}_{\eta,i}$ such that $2\alpha\beta \tilde{M}^*\ell_i\in2\mathbb{Z}^2$. For these $\ell_i\in\mathcal{T}_{\eta,i}$, $i=1,2,3$,  according to the definition of $\mathcal{T}_{\eta,i}$ and the fact $\alpha,\beta\in2\mathbb{Z}+1$, it can easily be checked that
$$\left\{2\alpha\beta\ell_i:i=1,2,3\right\}=\left\{\begin{pmatrix}
1\\
0\end{pmatrix},\begin{pmatrix}
0\\
1\end{pmatrix},\begin{pmatrix}
1\\
1\end{pmatrix}\right\}\pmod {2\mathbb{Z}^2}.$$
This together with $2\alpha\beta \tilde{M}^*\ell_i\in2\mathbb{Z}^2$ and a simple calculation gives that $\tilde{M}^*\in M_2(2\mathbb{Z})$, which is equivalent to $\tilde{M}\in M_2(2\mathbb{Z})$.  This finishes the proof of Theorem \ref{th(1.5)}.
\end{proof}

The following lemma plays an important role in the proof  of  Theorem \ref{th(1.6)}.

\begin{lemma}\label{th(3.7)}
Let $\mu_{\tilde{M},\tilde{D}}$ be a spectral measure, where $\tilde{M}$ and $\tilde{D}$ are given by \eqref{1.4} and \eqref{1.5} respectively. If $\eta>0$ in $\tilde{D}$, then $\tilde{M}=\begin{pmatrix}
\tilde{ a}&\tilde{b} \\
\tilde{c}&\tilde{d}
\end{pmatrix}$ satisfies  $2^{\eta+1}|\tilde{c}$.
\end{lemma}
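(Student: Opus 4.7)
The plan is to follow the strategy used for ``$(i)\Rightarrow(ii)$'' in Theorem \ref{th(1.5)}, but to track divisibility of the first coordinate by the higher power $2^{\eta+1}$ rather than merely by $2$. The first step is to invoke Lemma \ref{th(2.4)} with the diagonal matrix $A=\mathrm{diag}(2^{\eta+1}\alpha\beta,2^{\eta+1}\alpha\beta)$: since $\mu_{\tilde M,\tilde D}$ is assumed spectral, so is $\mu_{A,\tilde M,\tilde D}$, and one may pick a spectrum $\Lambda\ni 0$ and decompose it through \eqref{3.11}.

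The second step is to record two residue observations, both valid because $\eta>0$ and $\beta$ is odd, as is seen by direct inspection of \eqref{3.5}. For every $\ell\in\mathcal{T}_{\eta,0}\cup\mathcal{T}_{\eta,2}$ one has $(A\ell)_1\equiv 0\pmod{2^{\eta+1}}$, whereas for every $\ell\in\mathcal{T}_{\eta,1}\cup\mathcal{T}_{\eta,3}$ one has $(A\ell)_1\equiv 2^{\eta}\pmod{2^{\eta+1}}$. Moreover, for any $\ell'\in\mathcal{T}_{\eta,2}$ the second coordinate $(A\ell')_2=\alpha(2k_2'+1)-2k_2\omega$ is odd, so
\[
(A\tilde M^{*}\ell')_1 \;=\;\tilde a(A\ell')_1+\tilde c(A\ell')_2\;\equiv\;\tilde c\cdot(\text{odd})\pmod{2^{\eta+1}}.
\]
Hence it suffices to produce a single $\ell'\in\mathcal{T}_{\eta,2}$ for which $(A\tilde M^{*}\ell')_1\equiv 0\pmod{2^{\eta+1}}$.

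To produce such an $\ell'$, I would build a spectrum $\Gamma$ of $\mu_{\tilde M,\tilde D}$ from $\Lambda$ via Lemma \ref{th(3.5)} by choosing, for each $s\in\mathcal{S}_\eta$, an index $i_s\in\{0,1,2,3\}$: take $i_0=0$, and for $s\neq 0$ select $i_s$ so that the coset of $s+A\mathcal{T}_{\eta,i_s}$ in a suitable quotient of $\mathbb{Z}^2$ avoids $\bigcup_{j=1}^{3}A\tilde M^{*}\mathcal{T}_{\eta,j}+2^{\eta+1}\mathbb{Z}^2$. Then, by Lemma \ref{th(2.4)} in reverse, $A\tilde M^{*-1}\Gamma$ is a spectrum of $\mu_{A,\tilde M,\tilde D}$ containing $0$, and Remark \ref{re(3.6)} at $s'=0$ supplies $\ell_2'\in\mathcal{T}_{\eta,2}$ with $\Lambda'_{0,\ell_2'}\neq\emptyset$. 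Picking $\lambda_2'$ there and matching back to $\Gamma$ gives
\[
A\tilde M^{*}\ell_2'+A\tilde M^{*}\lambda_2' \;=\; s_2+A\ell_2+A\lambda_2
\]
with $s_2\in\mathcal{S}_\eta$, $\ell_2\in\mathcal{T}_{\eta,i_{s_2}}$, $\lambda_2\in\mathbb{Z}^2$. Since $A\tilde M^{*}\lambda_2'$ and $A\lambda_2$ both lie in $A\mathbb{Z}^2\subset 2^{\eta+1}\mathbb{Z}^2$, this forces $s_2+A\ell_2\in A\tilde M^{*}\mathcal{T}_{\eta,2}+2^{\eta+1}\mathbb{Z}^2$, and the avoidance property compels $s_2=0$ and hence $\ell_2\in\mathcal{T}_{\eta,i_0}=\mathcal{T}_{\eta,0}$. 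In particular $(A\ell_2)_1\equiv 0\pmod{2^{\eta+1}}$, which combined with the residue identity above delivers $2^{\eta+1}\mid\tilde c$.

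The main obstacle I anticipate is verifying the coset choice of $i_s$. When $\eta=0$, the four sets $s+A\mathcal{T}_{\eta,i}$ occupy all four cosets of $\mathbb{Z}^2/2\mathbb{Z}^2$ while $\bigcup_{j=1}^{3}A\tilde M^{*}\mathcal{T}_{\eta,j}+2\mathbb{Z}^2$ covers at most three, so a simple pigeonhole argument suffices, exactly as in Theorem \ref{th(1.5)}. For $\eta>0$, however, the first coordinates of $A\mathcal{T}_{\eta,i}$ collapse to the two residues $\{0,2^{\eta}\}\pmod{2^{\eta+1}}$, and the pigeonhole must therefore be enacted in a refined quotient that additionally tracks the parity of the second coordinate (which separates $\mathcal{T}_{\eta,0}$ from $\mathcal{T}_{\eta,2}$ and $\mathcal{T}_{\eta,1}$ from $\mathcal{T}_{\eta,3}$). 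Carrying out this finer coset count uniformly in $\tilde M$ is the central technical hurdle.
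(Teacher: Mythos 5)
Your reduction to finding a single $\ell'\in\mathcal{T}_{\eta,2}$ with $(A\tilde M^{*}\ell')_1\equiv 0\pmod{2^{\eta+1}}$ is correct, and the matching argument at the end has the right shape. The gap is the step you yourself flag, and it is not merely technical: the avoidance property cannot in general be arranged modulo $2^{\eta+1}\mathbb{Z}^2$, nor in the refinement $\mathbb{Z}/2^{\eta+1}\times\mathbb{Z}/2$. The pigeonhole in the proof of Theorem \ref{th(1.5)} works because, mod $2\mathbb{Z}^2$ and with $\eta=0$, each of the sets $2\alpha\beta\tilde M^{*}\mathcal{T}_{\eta,j}$ collapses to a \emph{single} coset, so the union over $j=1,2,3$ misses at least one of the four cosets met by $s+2\alpha\beta\mathcal{T}_{\eta,i}$. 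Modulo $2^{\eta+1}$, however, $A\tilde M^{*}\mathcal{T}_{\eta,j}$ is not a single coset: for $\ell\in\mathcal{T}_{\eta,2}$ the entry $(A\ell)_2=2k_2'\alpha-2k_2\omega+\alpha$ runs through \emph{every} odd residue mod $2^{\eta+1}$ as $k_2'$ varies over $\hbar_{2^{\eta}\beta}$ (because $\alpha$ is odd), so writing $\tilde c=2^{\tau}c'$ with $c'$ odd and $\tau\le\eta$, the first coordinates $\tilde a(A\ell)_1+\tilde c(A\ell)_2$ sweep out all $2^{\eta-\tau}$ residues exactly divisible by $2^{\tau}$; the same happens for $j=1,3$. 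The set you must avoid is thus a union of many cosets whose size depends on the unknown $\tau$, the clean ``four versus at most three'' count disappears, and there is no uniform choice of $i_s$.

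The paper resolves exactly this difficulty by a preliminary similarity rather than by a finer quotient. Assuming $2^{\eta+1}\nmid\tilde c$, it writes $\tilde c=2^{\tau}c'$ with $c'$ odd and $\tau\le\eta$, and conjugates by $Q_1=\mathrm{diag}(1,2^{-\tau})$; this replaces $(\tilde M,\tilde D)$ by an integer pair $(M_1,D_1)$ whose $(2,1)$-entry is the odd number $c'$ and whose digit parameter is $\eta-\tau$. If $\tau=\eta$ one lands in the $\eta=0$ situation and Theorem \ref{th(1.5)} forces $M_1\in M_2(2\mathbb{Z})$, contradicting the oddness of $c'$. If $\tau<\eta$ the entire avoidance argument is run modulo $2\mathbb{Z}^2$, where $A_1M_1^{*}\mathcal{T}_{\eta-\tau,2}$ does lie in a single coset ($\Phi_1$ or $\Phi_3$ according to the parity of $\tilde d$) and the choice of $i_s$ is again a genuine pigeonhole; the resulting $\ell_2'\in\mathcal{T}_{\eta-\tau,2}$ yields $c'\alpha\in 2\mathbb{Z}$, a contradiction. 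In short, your skeleton needs the rescaling that converts the statement $2^{\eta+1}\mid\tilde c$ into an oddness statement testable mod $2$; without it the coset count at the heart of your construction genuinely fails.
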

\begin{proof}
Suppose, on the contrary, that $2^{\eta+1}\nmid \tilde{c}$. Then one may write $\tilde{c}=2^\tau c^\prime$ for some integer $\tau\leq \eta$ and $c^\prime\in 2\mathbb{Z}+1$. Let $Q_1={\rm diag}\bigl(1,\frac{1}{2^\tau}\bigl)$. A simple calculation gives
\begin{equation*}
M_1:=Q_1\tilde{M}Q_1^{-1}
=\begin{pmatrix}
\tilde{a}&2^\tau \tilde{b} \\
c^\prime&\tilde{d}
\end{pmatrix}\in M_2(\mathbb{Z})
\end{equation*}
and
\begin{equation*}
D_1:=Q_1\tilde{D}=\left\{ \begin{pmatrix}
0\\0\end{pmatrix},\begin{pmatrix}
\alpha\\0\end{pmatrix},\begin{pmatrix}
\omega \\ 2^{\eta-\tau}\beta\end{pmatrix},\begin{pmatrix}
-\alpha-\omega \\
-2^{\eta-\tau}\beta\end{pmatrix}\right\}\subset\mathbb{Z}^2,
\end{equation*}
where $\alpha,\beta\in2\mathbb{Z}+1$. Since $\mu_{\tilde{M},\tilde{D}}$ is a spectral measure, it follows from Lemmas \ref{th(2.2)} and \ref{th(2.4)} that  $\mu_{M_1,D_1}$ and $\mu_{A_1,M_1,D_1}$ are also  spectral measures, where $A_1={\rm diag}\bigl(2^{\eta-\tau+1}\alpha\beta,2^{\eta-\tau+1}\alpha\beta\bigl)$ and $\mu_{A_1,M_1,D_1}$ is defined by \eqref{2.5}.

If $\tau=\eta$, it follows from  Theorem \ref{th(1.5)} that $M_1\in M_2(2\mathbb{Z})$. This means that $c^\prime\in 2\mathbb{Z}$, a contradiction. Hence the assertion follows.

If $\tau<\eta$, we derive the contradiction by constructing a spectrum of $\mu_{{M}_1,D_1}$. Recall that
$\mathcal{S}_{\eta-\tau}$ and $\mathcal{T}_{\eta-\tau}=\bigcup_{i=0}^{3}\mathcal{T}_{\eta-\tau,i}$
are defined by \eqref{3.5}, we first prove the following two claims.

\noindent
\emph{{\bf Claim 1.} Let $\Phi_1$ and $\Phi_3$  be given by \eqref{3.22}. Then
\begin{equation*}
2^{\eta-\tau+1}\alpha\beta {M}_1^{*}\mathcal{T}_{\eta-\tau,2}\subset\begin{cases}
\Phi_{1},  & \mbox{if }\tilde{d}\in2\Bbb Z; \\
\Phi_{3}, & \mbox{if }\tilde{d}\in2\Bbb Z+1.
\end{cases}
\end{equation*}}
\begin{proof}[Proof of Claim 1]
For any $\ell\in\mathcal{T}_{\eta-\tau,2}$, there exist
$k\in \hbar_\alpha$ and $k^{\prime}\in\hbar_{2^{\eta-\tau}\beta}$ such that
\begin{equation}\label{3.27}
\ell=\frac{1}{2^{\eta-\tau+1}\alpha\beta} \begin{pmatrix}
2^{\eta-\tau+1}k\beta \\
2k^{\prime}\alpha-2k\omega+\alpha
\end{pmatrix}.
\end{equation}
Since $M_1
=\begin{pmatrix}
\tilde{a}&2^\tau \tilde{b} \\
c^\prime&\tilde{d}
\end{pmatrix}$, $\tau<\eta$ and $\alpha,c^\prime\in 2\mathbb{Z}+1$, it follows from  \eqref{3.27} that
\begin{equation*}
2^{\eta-\tau+1}\alpha\beta {M}_1^{*}\ell=\begin{pmatrix}
2(2^{\eta-\tau}k\tilde{a}\beta+(k^{\prime}\alpha-k\omega)c^\prime)+c^\prime\alpha \\
2(2^{\eta}k\tilde{b}\beta+(k^{\prime}\alpha-k\omega)\tilde{d})+\tilde{d}\alpha
\end{pmatrix} =\begin{pmatrix}
1 \\
\tilde{d}
\end{pmatrix}\pmod {2\mathbb{Z}^2}.
\end{equation*}
Consequently, $2^{\eta-\tau+1}\alpha\beta {M}_1^{*}\ell\in \Phi_{1}$ if $\tilde{d}\in2\Bbb Z$, and $2^{\eta-\tau+1}\alpha\beta {M}_1^{*}\ell\in \Phi_{3}$ if $\tilde{d}\in2\Bbb Z+1$.
So the claim follows.
\end{proof}

\noindent
\emph{{\bf Claim 2.} Let $\Phi_1$ and $\Phi_3$  be given by \eqref{3.22}. Then for any $s\in\mathcal{S}_{\eta-\tau}\setminus\{0\}$, the following two statements hold.
\begin{enumerate}[(i)]
 	\item  There exist some ${i_s}\in\{0,1,2,3\}$ such that $s+2^{\eta-\tau+1}\alpha\beta\ell_s\notin\Phi_{1}$  for any $\ell_s\in\mathcal{T}_{\eta-\tau,i_s}$;
 	\item  There exist some ${i_s}\in\{0,1,2,3\}$ such that $s+2^{\eta-\tau+1}\alpha\beta\ell_s\notin\Phi_{3}$  for any $\ell_s\in\mathcal{T}_{\eta-\tau,i_s}$.
 \end{enumerate}
}
\begin{proof}[Proof of Claim 2]
Begin by observing that $\alpha\in 2\mathbb{Z}+1$ and $\tau<\eta$, then for any $\ell_i\in\mathcal{T}_{\eta-\tau,i}$, $i=0,1,2,3$, we have
$$2^{\eta-\tau+1}\alpha\beta\ell_0=\begin{pmatrix}
2^{\eta-\tau+1}k_0\beta \\
2k_0^{\prime}\alpha-2k_0\omega
\end{pmatrix}=\begin{pmatrix}
0 \\
0
\end{pmatrix}\pmod {2\mathbb{Z}^2},$$
$$2^{\eta-\tau+1}\alpha\beta\ell_1=\begin{pmatrix}
2^{\eta-\tau}(2k_1\beta+\beta) \\
2k_1^{\prime}\alpha-2k_1\omega-\omega
\end{pmatrix}=\begin{pmatrix}
0 \\
\omega
\end{pmatrix}\pmod {2\mathbb{Z}^2},$$
$$2^{\eta-\tau+1}\alpha\beta \ell_2=\begin{pmatrix}
2^{\eta-\tau+1}k_2\beta \\
2k_2^{\prime}\alpha-2k_2\omega+\alpha
\end{pmatrix}=\begin{pmatrix}
0 \\
1
\end{pmatrix}\pmod {2\mathbb{Z}^2}
$$
and
$$
2^{\eta-\tau+1}\alpha\beta \ell_3=\begin{pmatrix}
2^{\eta-\tau}(2k_3\beta+\beta) \\
2k_3^{\prime}\alpha-2k_3\omega+\alpha-\omega
\end{pmatrix}=\begin{pmatrix}
0 \\
\omega-1
\end{pmatrix}\pmod {2\mathbb{Z}^2}$$
for some
$k_i\in \hbar_\alpha$ and $k_i^{\prime}\in\hbar_{2^{\eta-\tau}\beta}$.
Without loss of generality, we assume that $\omega\in 2\mathbb{Z}$ (the case $\omega\in 2\mathbb{Z}+1$ can be similarly proved). Then a simple calculation gives
 \begin{equation}\label{3.28}
 2^{\eta-\tau+1}\alpha\beta\ell_0, 2^{\eta-\tau+1}\alpha\beta\ell_1\in\Phi_{0}\quad {\rm and}\quad
2^{\eta-\tau+1}\alpha\beta\ell_2,2^{\eta-\tau+1}\alpha\beta\ell_3\in\Phi_{2}.
\end{equation}
Recall that $\mathcal{T}_{\eta-\tau}=\bigcup_{i=0}^{3}\mathcal{T}_{\eta-\tau,i}$. Then  for any $s=(s_1,s_2)^t\in\mathcal{S}_{\eta-\tau}\setminus\{0\}$,   we take
\begin{equation*}
\ell_s\in\begin{cases}
\mathcal{T}_{\eta-\tau},  & \mbox{if }s_1\in2\Bbb Z; \\
\mathcal{T}_{\eta-\tau,2}\cup\mathcal{T}_{\eta-\tau,3},  & \mbox{if }s_1\in2\Bbb Z+1,s_2\in2\Bbb Z;\\
\mathcal{T}_{\eta-\tau,0}\cup\mathcal{T}_{\eta-\tau,1},  & \mbox{if }s_1,s_2\in2\Bbb Z+1.
\end{cases}
\end{equation*}
This together with \eqref{3.28} yields that $s+2^{\eta-\tau+1}\alpha\beta\ell_s\notin\Phi_{1}$, which proves (i).   For (ii),
we take
\begin{equation*}
\ell_s\in\begin{cases}
\mathcal{T}_{\eta-\tau},  & \mbox{if }s_1\in2\Bbb Z; \\
\mathcal{T}_{\eta-\tau,0}\cup\mathcal{T}_{\eta-\tau,1},  & \mbox{if }s_1\in2\Bbb Z+1,s_2\in2\Bbb Z;\\
\mathcal{T}_{\eta-\tau,2}\cup\mathcal{T}_{\eta-\tau,3},  & \mbox{if }s_1,s_2\in2\Bbb Z+1.
\end{cases}
\end{equation*}
Consequently, $s+2^{\eta-\tau+1}\alpha\beta\ell_s\notin\Phi_{3}$ by \eqref{3.28}.
Thus Claim 2 follows.
\end{proof}

We now continue with the proof of the case $\tau<\eta$. In the following proof, we might as well assume $\tilde{d}\in2\Bbb Z$ in $M_1$. If $\tilde{d}\in2\Bbb Z+1$, we only need to replace Claim 2(i) with Claim 2(ii).

Since $\tau<\eta$ and $\tilde{d}\in2\Bbb Z$, it follows from Claim 2(i) that for any $s\in\mathcal{S}_{\eta-\tau}\setminus\{0\}$, there must exist some ${i_s}\in\{0,1,2,3\}$ such that $s+2^{\eta-\tau+1}\alpha\beta\ell_s\notin\Phi_{1}$ for any $\ell_s\in\mathcal{T}_{\eta-\tau,i_s}$.
Let $\tilde{\Lambda}$ be a spectrum of $\mu_{A_1,M_1,D_1}$ with $0\in\tilde{\Lambda}$. Define
\begin{equation*}
\tilde{\Gamma}=\tilde{\Delta}_{0,0}\cup\bigcup_{s\in\mathcal{S}_{\eta-\tau}\setminus\{{0}\}}
\tilde{\Delta}_{s,i_s},
\end{equation*}
where $\tilde{\Delta}_{0,0}=\bigcup_{\ell_0\in\mathcal{T}_{\eta-\tau,0}}(\ell_0+\tilde{\Lambda}_{0,\ell_0})$, $\tilde{\Delta}_{s,i_s}=
\bigcup_{\ell_s\in\mathcal{T}_{\eta-\tau,i_s}}
(\frac{s+2^{\eta-\tau+1}\alpha\beta\ell_s}{2^{\eta-\tau+1}\alpha\beta}
+\tilde{\Lambda}_{s,\ell_s})$ with $$(s+2^{\eta-\tau+1}\alpha\beta\mathcal{T}_{\eta-\tau,i_s})
\cap\Phi_{1}=\emptyset,$$ and
$$\tilde{\Lambda}_{s,\ell_s}=\left\{\gamma\in\mathbb{Z}^2:s+2^{\eta-\tau+1}\alpha\beta\ell_s+2^{\eta-\tau+1}\alpha\beta\gamma\in\tilde{\Lambda}\right\}.$$
Using the similar argument as in the proof of Lemma \ref{th(3.5)}, we can  carry out that $\tilde{\Gamma}$
is a spectrum of $\mu_{{M}_1,D_1}$ with $0\in\tilde{\Gamma}$.

Next, we prove that  there must exist  $\ell\in\mathcal{T}_{\eta-\tau,2}$ such that $2^{\eta-\tau+1}\alpha\beta {M}_1^*\ell\in2\mathbb{Z}^2$. Since $\tilde{\Gamma}$
is a spectrum of $\mu_{{M}_1,D_1}$ with $0\in\tilde{\Gamma}$, it follows from Lemma \ref{th(2.4)}  that $2^{\eta-\tau+1}\alpha\beta M_1^{*-1}\tilde{\Gamma}$ is a spectrum of $\mu_{A_1,{M}_1,D_1}$ with $0\in2^{\eta-\tau+1}\alpha\beta {M}_1^{*-1}\tilde{\Gamma}$. Similar to \eqref{3.25}, we have that
\begin{equation*}
2^{\eta-\tau+1}\alpha\beta {M}_1^{*-1}\tilde{\Gamma}=\bigcup_{s^\prime\in\mathcal{S}_{\eta-\tau}}\bigcup_{i\in\{0,1,2,3\}}
\bigcup_{\ell_{i}^\prime\in\mathcal{T}_{\eta-\tau,i}}(s^\prime+2^{\eta-\tau+1}\alpha\beta\ell_{i}^\prime+
2^{\eta-\tau+1}\alpha\beta\tilde{\Lambda}^\prime_{s^\prime,\ell_{i}^\prime}),
\end{equation*}
where
$$
\tilde{\Lambda}^\prime_{s^\prime,\ell_{i}^\prime}=\left\{\gamma\in\mathbb{Z}^2:s^\prime+2^{\eta-\tau+1}\alpha\beta\ell_{i}^\prime+
2^{\eta-\tau+1}\alpha\beta\gamma\in2^{\eta-\tau+1}\alpha\beta {M}_1^{*-1}\tilde{\Gamma}\right\}.
$$
For $s^\prime=0$ and $\ell^\prime_{i}=0\in\mathcal{T}_{\eta-\tau,0}$, it follows from $0\in2^{\eta-\tau+1}\alpha\beta {M}_1^{*-1}\tilde{\Gamma}$ that $\tilde{\Lambda}^\prime_{0,0}\neq\emptyset$. Similar to Remark \ref{re(3.6)}, one may infer that there  exists
$\ell^\prime_{2}\in\mathcal{T}_{\eta-\tau,2}$ such that $\tilde{\Lambda}^\prime_{0,\ell_{2}^\prime}\neq\emptyset$. Therefore, applying Claim 1 and the similar argument as in the proof of Theorem \ref{th(1.5)}, we can easily conclude that $2^{\eta-\tau+1}\alpha\beta {M}_1^*\ell^\prime_{2}\in2\mathbb{Z}^2$. Thus the assertion follows.

Finally, we prove $2^{\eta+1}| \tilde{c}$. The above discussion means that there exist some $\ell\in\mathcal{T}_{\eta-\tau,2}$ such that $2^{\eta-\tau+1}\alpha\beta {M}_1^*\ell\in2\mathbb{Z}^2$. For these $\ell\in\mathcal{T}_{\eta-\tau,2}$, it follows from \eqref{3.27} that
 \begin{equation*}
2^{\eta-\tau+1}\alpha\beta {M}_1^{*}\ell=\begin{pmatrix}
2(2^{\eta-\tau}k\tilde{a}\beta+(k^{\prime}\alpha-k\omega)c^\prime)+c^\prime\alpha \\
2(2^{\eta}k\tilde{b}\beta+(k^{\prime}\alpha-k\omega)\tilde{d})+\tilde{d}\alpha
\end{pmatrix}
\end{equation*}
for some $k\in \hbar_\alpha$ and $k^{\prime}\in\hbar_{2^{\eta-\tau}\beta}$.
Together with $2^{\eta-\tau+1}\alpha\beta {M}_1^*\ell\in2\mathbb{Z}^2$, it yields that $c^\prime\alpha\in2\mathbb{Z}$. This contradicts the fact  $c^\prime,\alpha\in2\mathbb{Z}+1$, and hence the assumption $2^{\eta+1}\nmid \tilde{c}$ does not hold. Therefore, we obtain $2^{\eta+1}| \tilde{c}$, and complete the proof.
\end{proof}

Having established the above preparation, now we are in a position to prove Theorem \ref{th(1.6)}.

\begin{proof}[ Proof of  Theorem \ref{th(1.6)}]
We first prove the necessity. Suppose  $\mu_{\tilde{M},\tilde{D}}$ is a spectral measure. In view of Lemma \ref{th(3.7)}, we have that $\tilde{M}=\begin{pmatrix}
\tilde{ a}&\tilde{b} \\
\tilde{c}&\tilde{d}
\end{pmatrix}$ satisfies  $2^{\eta+1}|\tilde{c}$. Thus one may write $\tilde{c}=2^{\eta+1}\kappa$ with $\kappa\in \mathbb{Z}$. Let $\tilde{Q}={\rm diag}\bigl(1,\frac{1}{2^\eta}\bigl)$. By a simple calculation, we get
\begin{equation}\label{3.29}
\bar{M}:=\tilde{Q}\tilde{M}\tilde{Q}^{-1}
=\begin{pmatrix}
\tilde{a}&2^\eta \tilde{b} \\
2\kappa&\tilde{d}
\end{pmatrix}
\end{equation}
and
\begin{equation}\label{3.30}
\bar{D}:=\tilde{Q}\tilde{D}=\left\{ \begin{pmatrix}
0\\0\end{pmatrix},\begin{pmatrix}
\alpha\\0\end{pmatrix},\begin{pmatrix}
\omega \\ \beta\end{pmatrix},\begin{pmatrix}
-\alpha-\omega \\
-\beta\end{pmatrix}\right\}.
\end{equation}
Since $\mu_{\tilde{M},\tilde{D}}$ is a spectral measure,  it follows from Lemma \ref{th(2.2)} that  $\mu_{\bar{M},\bar{D}}$ is also a spectral measure. Then with Theorem \ref{th(1.5)}, we have $\bar{M}\in M_2(2\mathbb{Z})$.
This together with \eqref{3.29} gives that $\tilde{a},\tilde{d}\in2\Bbb Z$. Hence the necessity follows.

Now we are devoted to proving the sufficiency.
Suppose $\tilde{M}=\begin{pmatrix}
\tilde{ a}&\tilde{b} \\
\tilde{c}&\tilde{d}
\end{pmatrix}$, where $\tilde{a},\tilde{d}\in2\Bbb Z$ and $2^{\eta+1}| \tilde{c}$. Then there exist  $a^*,c^*,d^*\in\mathbb{Z}$ such that $ \tilde{a}=2a^*$, $\tilde{c}=2^{\eta+1}c^*$ and $ \tilde{d}=2d^* $.  Let $\tilde{Q}={\rm diag}\bigl(1,\frac{1}{2^\eta}\bigr)$. A simple calculation gives
\begin{equation*}
M^\prime:=\tilde{Q}\tilde{M}\tilde{Q}^{-1}
=\begin{pmatrix}
2a^*&2^\eta \tilde{b} \\
 2c^*&2d^*
\end{pmatrix},
\end{equation*}
and  $\bar{D}=\tilde{Q}\tilde{D}$  is given by \eqref{3.30}.
Since $\eta>0$, it follows from Theorem \ref{th(1.5)} that $\mu_{M^\prime,\bar{D}}$ is  a spectral measure. Therefore, $\mu_{\tilde{M},\tilde{D}}$ is  a spectral measure by Lemma \ref{th(2.2)}.

This completes the proof of Theorem \ref{th(1.6)}.
\end{proof}

\section{\bf Proofs of Theorems \ref{th(main)} and \ref{th(main2)}}

In the present section, we  are committed to investigating the spectrality of the measure $\mu_{M,D}$, where $M\in M_2(\Bbb Z)$ is an expansive integer  matrix and $D$ is given by \eqref{1.2}. We first prove  Theorem \ref{th(main)} by using Theorems \ref{th(1.5)} and \ref{th(1.6)}, and then prove Theorem \ref{th(main2)}. Finally, we  provide some concluding remarks.

\begin{proof}[Proof of Theorem \ref{th(main)}]
The sufficiency follows directly from Theorem \ref{th(DHL)} and Lemma \ref{th(2.2)}. Now we are devoted to proving the necessity. Suppose that $\mu_{M,D}$ is a spectral measure. Let $\eta=\max\left\{r:2^r|(\alpha_1\beta_2-\alpha_2\beta_1)\right\}$,
and let $\tilde{M}$ and $\tilde{D}$ be given by \eqref{1.4} and \eqref{1.5} respectively. That is, $\tilde{M}=QMQ^{-1}$
and $\tilde{D}=QD$.
In view of Lemma \ref{th(2.2)}, $\mu_{\tilde{{M}},\tilde{{D}}}$ is  a spectral measure. It suffices to prove that there exists a matrix  $\tilde{Q}\in M_2(\mathbb{R})$  such that  $(\bar{M},\bar{D})$ is admissible, where $\bar{M}=\tilde{Q}\tilde{M}\tilde{Q}^{-1}$
and $\bar{D}=\tilde{Q}\tilde{D}$. The proof will be divided into the following two cases.

Case 1: $\eta=0$.  Since $\mu_{\tilde{{M}},\tilde{{D}}}$  is a spectral measure, it follows from $\eta=0$ and Theorem \ref{th(1.5)} that $(\tilde{{M}},\tilde{{D}})$ is admissible. Thus the assertion follows by taking $\tilde{Q}={\rm diag}\bigl(1,1\bigr)$.

Case 2:  $\eta>0$. Note that $\mu_{\tilde{{M}},\tilde{{D}}}$  is a spectral measure, thus Theorem \ref{th(1.6)} implies that one may write $\tilde{M}=\begin{pmatrix}
2a^\prime&b^\prime \\
2^{\eta+1}c^\prime&2d^\prime
\end{pmatrix}$, where $a^\prime,b^\prime,c^\prime,d^\prime,\in\Bbb Z$.
We take $\tilde{Q}={\rm diag}\bigl(1,\frac{1}{2^\eta}\bigr)$. Then $$\bar{M}=\tilde{Q}\tilde{M}\tilde{Q}^{-1}=\begin{pmatrix}
2a^\prime&2^{\eta}b^\prime \\
2c^\prime&2d^\prime
\end{pmatrix} \quad {\rm and} \quad \bar{D}=\tilde{Q}\tilde{D}=\left\{ \begin{pmatrix}
0\\0\end{pmatrix},\begin{pmatrix}
\alpha\\0\end{pmatrix},\begin{pmatrix}
\omega \\ \beta\end{pmatrix},\begin{pmatrix}
-\alpha-\omega \\
-\beta\end{pmatrix}\right\}.$$
Using $\eta>0$, it is clear that $\bar{M}\in M_2(2\Bbb Z)$. Hence $(\bar{M},\bar{D})$ is admissible by Theorem \ref{th(1.5)}.

This completes the proof of Theorem \ref{th(main)}.
\end{proof}

Next, we focus on proving Theorem \ref{th(main2)}.
\begin{proof}[Proof of Theorem \ref{th(main2)}]
Let $\tilde{M}$ and $\tilde{D}$ be given by \eqref{1.4} and \eqref{1.5}, respectively. That is, \begin{equation}\label{4.1}
\tilde{M}=QMQ^{-1}\quad {\rm and} \quad\tilde{D}=QD,
\end{equation}
where the matrix $Q\in M_2(\Bbb Z)$ satisfies $\det(Q)=1$. In view of Lemma \ref{th(2.2)}, $\mu_{M,D}$ is a spectral measure if and only if $\mu_{\tilde{M},\tilde{D}}$ is a spectral measure. This implies that Theorem \ref{th(main2)}(i) is equivalent to Theorem \ref{th(1.5)}(i). Note that $\det(Q)=1$, by a simple calculation, one has that
$$
M\in M_2(2\Bbb Z)\Longleftrightarrow\tilde{M}\in M_2(2\Bbb Z).
$$
Thus Theorem \ref{th(main2)}(ii) and (iii) are  equivalent to Theorem  \ref{th(1.5)}(ii) and (iii), respectively. Finally, from the Definition \ref{deA} and \eqref{4.1}, it is easy to see that
$(\tilde{M},\tilde{D})$ is admissible $\Longleftrightarrow$ there exists a set $ \tilde{C}\subset\mathbb{Z}^2$ such that
$(\tilde{M},\tilde{D},\tilde{C})$ is a Hadamard triple $\Longleftrightarrow$
$(M,D,Q^*\tilde{C})$ is a Hadamard triple $\Longleftrightarrow$ $(M,D)$ is admissible. Consequently, Theorem \ref{th(main2)}(iv) is equivalent to Theorem  \ref{th(1.5)}(iv).

Therefore, the desired result now is obtained  by appeal to Theorem  \ref{th(1.5)}.
\end{proof}

At the end of this paper, we give some further remarks and list an open question which related to our main results.  The following example is specifically used to display our results, which are convenient to judge whether the measure $\mu_{M,D}$ in question {\bf (Qu 1)} is a spectral measure.

\begin{ex} \label{th(4.2)}
{\rm Let  $M_1=\begin{pmatrix}
2&b \\
2&2
\end{pmatrix}$ and
$M_2=\begin{pmatrix}
2&b\\
4&2
  \end{pmatrix}$ be two expansive integer matrices, and let
\begin{equation*}
D_1=\left\{\begin{pmatrix}
0\\0\end{pmatrix},\begin{pmatrix}
1\\ 0
\end{pmatrix},
\begin{pmatrix}
0\\ 1
\end{pmatrix},\begin{pmatrix}
-1\\ -1
\end{pmatrix}\right\} \quad {\rm and} \quad
D_2=\left\{\begin{pmatrix}
0\\0\end{pmatrix},\begin{pmatrix}
1\\ 0
\end{pmatrix},
\begin{pmatrix}
0\\ 2
\end{pmatrix},\begin{pmatrix}
-1\\ -2
\end{pmatrix}\right\}.
\end{equation*}
Then the following statements hold.
 \begin{enumerate}[(i)]
 \item $\mu_{M_1,D_1}$ and $\mu_{M_2,D_1}$ are spectral measures if and only if $b\in 2\mathbb{Z}$;
 	\item   $\mu_{M_1,D_2}$ is a non-spectral measure, while $\mu_{M_2,D_2}$ is a spectral  measure.
 \end{enumerate}
}
\end{ex}
\begin{proof}
By a simple calculation, the result follows directly from Theorems \ref{th(1.5)} and \ref{th(1.6)}.
\end{proof}

It is worth noting that if $\alpha_1\beta_2-\alpha_2\beta_1\in 2\Bbb Z$ in Theorem \ref{th(main)}, we cannot give the specific form of matrix $M$. However, if $\alpha_1,\alpha_2,\beta_1$ and $\beta_2$ are fixed, we can describe the specific form by applying Theorem  \ref{th(1.6)}. The following simple but interesting example is devoted to illustrating this fact.

\begin{ex} \label{th(4.3)}
{\rm Let $M=\begin{pmatrix}
a&b \\
c&d
\end{pmatrix}$ be an expansive integer matrix, and let
\begin{equation*}
D=\left\{\begin{pmatrix}
0\\0\end{pmatrix},\begin{pmatrix}
1\\ 2
\end{pmatrix},
\begin{pmatrix}
3\\ 8
\end{pmatrix},\begin{pmatrix}
-4\\ -10
\end{pmatrix}\right\}.
\end{equation*}
Then $\mu_{M,D}$ is a spectral  measure if and only if $a,d\in 2\mathbb{Z}$ and $c\in 4\mathbb{Z}$.
}
\end{ex}
\begin{proof}
Write $Q=\begin{pmatrix}
 3&-1 \\
 -2&1
\end{pmatrix}$. Then it is direct to compute that
\begin{equation*}
\tilde{M}:=QMQ^{-1}
=\begin{pmatrix}
3a-c+2(3b-d) & 3a-c+3(3b-d) \\
c-2a+2(d-2b) & c-2a+3(d-2b)
\end{pmatrix}
\end{equation*}
and
\begin{equation*}
\tilde{D}:=QD=\left\{ \begin{pmatrix}
0\\0\end{pmatrix},\begin{pmatrix}
1\\0\end{pmatrix},\begin{pmatrix}
1 \\ 2\end{pmatrix},\begin{pmatrix}
-2 \\
-2\end{pmatrix}\right\}.
\end{equation*}
By Lemma \ref{th(2.2)},  $\mu_{M,D}$ is a spectral  measure if and only if $\mu_{\tilde{{M}},\tilde{{D}}}$ is  a spectral measure.

For the sufficiency, it follows from $a,d\in 2\mathbb{Z}$ and $c\in 4\mathbb{Z}$ that there exist $\tilde{a},\tilde{c},\tilde{d}\in\mathbb{Z}$ such that $a=2\tilde{a},d=2\tilde{d}$ and $c= 4\tilde{c}$. Thus $\tilde{M}$ becomes $$\tilde{M}
=\begin{pmatrix}
2(3\tilde{a}-2\tilde{c}+3b-d) & 3a-c+3(3b-d) \\
4(\tilde{c}-\tilde{a}+\tilde{d}-b) & 2(2\tilde{c}-a+3\tilde{d}-b)
\end{pmatrix}.$$
This together with Theorem \ref{th(1.6)} yields that $\mu_{\tilde{M},\tilde{D}}$ is a spectral measure, and hence the sufficiency follows.

Conversely, suppose $\mu_{\tilde{M},\tilde{D}}$ is a spectral measure. Applying Theorem \ref{th(1.6)}, we have
\begin{equation*}
\begin{cases}
3a-c+2(3b-d)\in2\Bbb Z, \\
c-2a+2(d-2b)\in4\Bbb Z, \\ c-2a+3(d-2b)\in2\Bbb Z.
\end{cases}
\end{equation*}
Consequently,
$3a-c,c+3d\in2\Bbb Z$ and $c-2a+2d\in4\Bbb Z$.
By a simple calculation, we infer that $a,d\in 2\mathbb{Z}$ and $c\in 4\mathbb{Z}$. This proves the necessity.
\end{proof}

We remark here that the digit set $D$ in \eqref{1.2} satisfies $\alpha_1\beta_2-\alpha_2\beta_1\neq 0$, it is of interest to consider the following question:

{\bf (Qu 2):} For an expansive integer matrix $M\in M_2(\mathbb{Z})$  and the digit set $D$ given by \eqref{1.2} with $\alpha_1\beta_2-\alpha_2\beta_1=0$, what is the sufficient and necessary condition for $\mu_{M,D}$ to be a spectral measure?

To the best of our knowledge, so far there is no sign of solving this problem. This seems to be a more difficult question. An answer to the question {\bf(Qu 2)}  may shed some light on the study of the spectrality of  fractal measures.

\end{document}